\definecolor{ao(english)}{rgb}{0.0, 0.5, 0.0}
\definecolor{pale}{HTML}{FF9955}
\DeclareMathOperator{\Mod}{Mod}
\newtheorem{theorem}{Theorem}[section]
\newtheorem{lemma}[theorem]{Lemma}
\newtheorem{remark}[theorem]{Remark}
\newtheorem{definition}[theorem]{Definition}
\newtheorem{corollary}[theorem]{Corollary}
\newtheorem{proposition}[theorem]{Proposition}
\newtheorem{question}{Question}
\begin{document}

\title{Large volume fibered knots in 3-manifolds}

\author{J. Robert Oakley}

 \begin{abstract}
We prove that for hyperbolic fibered knots in any closed, connected, oriented 3-manifold the volume and genus are unrelated. As an application we answer a question of Hirose, Kalfagianni, and Kin about volumes of mapping tori that are double branched covers.
\end{abstract}

\maketitle

\section{Introduction}
Let $M$ denote a closed, connected, oriented 3-manifold. Alexander in \cite{Alexander} proved that every such $M$ contains a fibered link. This result has been strengthened in many ways such as the link being a knot \cite{Myers} and the monodromy being right-veering and pseudo-Anosov \cite{CH}. More recently, this construction has been used in \cite{FPS} to show that for hyperbolic fibered knots in the 3-sphere, volume and genus are unrelated. In this paper we generalize the approach in \cite{FPS} to show that for hyperbolic, fibered knots in closed, connected, oriented 3-manifolds the volume and genus are unrelated. 
In particular we prove the following theorem.
\begin{theorem} \label{mainthm}
Let $M$ be a closed, connected, oriented 3-manifold. There exists some $g_0>1$ such that for all $g\geq g_0$ and $V>0$ there exists a knot $K\subseteq M$ such that $M - K$ is fibered over the circle with genus $g$ and $M - K$ is hyperbolic with $\text{vol}(M - K) > V.$
\end{theorem}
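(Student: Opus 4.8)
The plan is to reduce Theorem~\ref{mainthm} to the construction of a suitable hyperbolic \emph{fibered link} and then invoke Thurston's hyperbolic Dehn surgery theorem. The base case is supplied by \cite{CH}: $M$ contains a fibered knot $K_0$ with pseudo-Anosov monodromy, so by Thurston $M-K_0$ is hyperbolic; write $h_0$ for the genus of its fiber $F_0$ and set $g_0=h_0+1$ (a larger threshold can be used if the arguments below require it). Fix $g\ge g_0$ and $V>0$. I would aim to produce a link $L=K\cup c_1\cup\cdots\cup c_N\subseteq M$ such that: $K$ is a knot with $M-K$ fibered over $S^1$ with fiber of genus $g$; each $c_i$ is an unknot in $M$ bounding an embedded disk $\Delta_i$ meeting $K$ in two points and meeting a fiber of $K$ in a single arc; and $M-L$ is hyperbolic with $\text{vol}(M-L)>V+1$. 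Granting this, I perform $(-1/m)$--surgery simultaneously on all the $c_i$ for a single large integer $m$. Because each $c_i$ bounds $\Delta_i$, this returns the ambient manifold $M$; the effect on $K$ is to insert $m$ full twists along each $\Delta_i$, which (since each $\Delta_i$ meets a fiber in one arc) amounts to a Stallings-type retwisting of a band of the fiber surface, leaving the resulting knot $K'$ fibered with fiber still of genus $g$; and by Thurston's Dehn surgery theorem, for $m$ large enough $M-K'$ is hyperbolic with $\text{vol}(M-K')>\text{vol}(M-L)-1>V$. This is Theorem~\ref{mainthm}.

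It remains to build $L$. Starting from $F_0$, standard Hopf-plumbing moves produce a fibered surface in $M$ of any desired genus $g\ge g_0$ with connected boundary, bounding a fibered knot $K$ (plumbing of fibered surfaces is fibered, by Stallings and Gabai); moreover one can arrange $N$ additional plumbings, $N$ any prescribed even number, that leave the genus equal to $g$ while introducing $N$ clasp regions --- twist regions of $K$ compatible with the fibration. Around each clasp region place a small crossing circle $c_i$, so that $L=K\cup c_1\cup\cdots\cup c_N$. This realizes $L$ as a fully-augmented-link-type configuration around the genus-$g$ fibered knot $K$, and the key point is that $N$ is free, independent of $g$ --- this is the mechanism, as in \cite{FPS} for $S^3$, that decouples volume from genus. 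Each crossing disk $\Delta_i$, punctured by the two strands of $K$ it meets, is a thrice-punctured sphere in $M-L$; spreading the clasp regions along the fiber --- using the mixing of the pseudo-Anosov monodromy of $K_0$ to drive them into independent position --- makes these spheres essential and pairwise non-parallel. By Adams's theorem they are totally geodesic, and decomposing $M-L$ along them gives a universal linear lower bound $\text{vol}(M-L)\ge c_0 N$; choosing $N$ with $c_0 N>V+1$ secures the volume estimate.

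The main obstacle --- the step where the bulk of the work lies, exactly paralleling the $S^3$ case of \cite{FPS} --- is proving that the augmented link $L$ is hyperbolic. In $S^3$ one can appeal to explicit ideal polyhedral decompositions of fully augmented links; in a general $M$ these are not available, so instead I would verify that $M-L$ is irreducible, atoroidal, anannular, and not Seifert fibered, and then apply Thurston's hyperbolization theorem for Haken manifolds (the incompressible fiber makes $M-L$ Haken). Irreducibility and incompressibility of the fiber are routine; the delicate part is excluding essential annuli and tori, which could be introduced if some crossing circle is removable, if two of them become parallel, or if the plumbings create a product region. Arranging the plumbings non-trivially and exploiting the pseudo-Anosov hypothesis on the base monodromy should control these, but carrying this out --- the analogue of showing that a fully augmented link in an arbitrary $M$ is hyperbolic --- is the technical heart of the argument; a Myers-style ``excellence'' argument in the spirit of \cite{Myers}, or an iterated Dehn-filling argument as in \cite{FPS}, is the natural tool.
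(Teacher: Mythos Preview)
Your strategy is genuinely different from the paper's, and as written it is a sketch with the hard steps left open rather than a proof.

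The paper does not use augmented links or Dehn surgery. It realizes $M$ as a simple $3$-fold branched cover of $S^3$ over a knot (Hilden--Montesinos), puts that knot in braid position, and pads the braid word by $\Phi^n\Sigma\Phi^{-n}$ where $\Phi$ is pseudo-Anosov on a sub-disk and $\Sigma$ is a block of stabilizations. The braid axis lifts to a $2$-component fibered link in $M$ with fiber $S_{g-1,2}$ and monodromy $b_n=(PS)(S^{-1}F^nS)(F^{-n})$, where $F^{\pm n}$ are partial pseudo-Anosovs supported on two overlapping genus-$2$ subsurfaces. One Colin--Honda stabilization turns this into a fibered \emph{knot} of genus $g$ with pseudo-Anosov monodromy. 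Clay--Leininger--Mangahas then gives linearly growing subsurface projections, the Masur--Minsky pants-graph distance formula converts this to linearly growing $\tau_{\mathbf P}$, and Brock's theorem converts that to linearly growing volume. The genus is fixed by the number of strands; the volume is driven entirely by $n$. No atoroidality or polyhedral analysis is needed: hyperbolicity comes from ``monodromy is pseudo-Anosov'', and the volume from Brock.

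In your outline there are two concrete gaps. First, the claim that $(-1/m)$-surgery on the $c_i$ returns a \emph{fibered} knot of the \emph{same} genus is not automatic. A crossing disk meeting the fiber in a single arc does not by itself force a Stallings twist; for fiberedness to survive, $c_i$ must be isotopic in $M\setminus K$ to a curve on the fiber with surface framing equal to the disk framing, and you have not arranged this. (Already for the unknot in $S^3$, twisting two antiparallel strands yields twist knots, most of which are not fibered.) Relatedly, your mechanism for making ``$N$ free of $g$'' is unclear: Hopf plumbings change $\chi$ by $-1$ each, so if the boundary stays connected the number of plumbings is tied to $g-h_0$, contradicting the independence you need.

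Second, you explicitly defer the hyperbolicity of $M\setminus L$ as ``the technical heart'' without carrying it out. You are right that the polyhedral decomposition of fully augmented links is unavailable in a general $M$; the Haken route you propose (ruling out essential annuli and tori by hand, via a Myers-type excellence argument) is precisely the kind of $3$-dimensional case analysis the paper's $2$-dimensional monodromy approach is designed to avoid. Until that step and the fiberedness-under-twisting step are actually done, the proposal does not establish the theorem.
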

  
As an application of theorem \ref{mainthm} we answer a question asked by Hirose, Kalfagianni, and Kin in \cite{HKK}. Let $\mathfrak{D}_g(M) \subseteq \Mod(S_g)$ be the subset of the mapping class group of a closed surface of genus $g$ consisting of elements whose mapping tori are 2-fold branched covers of $M$ branched along a link. Hirose, Kalfagianni, and Kin asked the following question in \cite{HKK}.

\begin{question}\label{qn}
For $g$ sufficiently large, does $\mathfrak{D}_g(M)$ contain an infinite family of pseudo-Anosov mapping classes whose mapping tori have arbitrarily large volume?
\end{question}

Combining theorem \ref{mainthm} and theorem 10 of \cite{HKK} yields the following affirmative answer to question \ref{qn}.

\begin{corollary}
For any closed, connected, oriented 3-manifold $M$ there exists some $g_0>1$ such that for any $g>g_0$ the set $\mathfrak{D}_{2g}(M)$ contains an infinite family of pseudo-Anosov elements whose mapping tori have arbitrarily large volume.
\end{corollary}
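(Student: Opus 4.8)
The plan is to feed the fibered knots produced by Theorem~\ref{mainthm} directly into Theorem~10 of \cite{HKK}. Fix $g > g_0$, enlarging $g_0$ if Theorem~10 of \cite{HKK} requires a larger threshold. For each positive integer $n$, Theorem~\ref{mainthm} produces a knot $K_n \subseteq M$ such that $M - K_n$ is fibered over the circle with genus-$g$ fiber $S$ and is hyperbolic with $\mathrm{vol}(M - K_n) > n$. Since a surface bundle over the circle with fiber of negative Euler characteristic is hyperbolic only when its monodromy is pseudo-Anosov (Thurston), the monodromy $\phi_n \colon S \to S$ of $M - K_n$ is pseudo-Anosov.

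Next I would apply Theorem~10 of \cite{HKK} to each pair $(K_n, \phi_n)$. This yields a mapping class $\psi_n \in \mathfrak{D}_{2g}(M) \subseteq \Mod(S_{2g})$ whose mapping torus $N_n$ is a $2$-fold branched cover of $M$ along a link, which is pseudo-Anosov since $\phi_n$ is, and whose mapping-torus volume is controlled from below by $\mathrm{vol}(M - K_n)$. If Theorem~10 of \cite{HKK} already records this volume comparison---that is, if it accepts ``fixed-genus fibered knots of arbitrarily large volume'' as input and returns ``pseudo-Anosov classes in $\mathfrak{D}_{2g}(M)$ with arbitrarily large volume mapping tori''---then the combination with Theorem~\ref{mainthm} is the whole proof. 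If instead one must supply the volume estimate, the point to check is that $N_n$ minus the preimage of the branch link is an honest finite cover of the cusped hyperbolic manifold obtained by drilling $M$ along that link, a manifold whose volume is at least $\mathrm{vol}(M - K_n)$ because the branch link contains $K_n$; comparing $\mathrm{vol}(N_n)$ with this cusped volume through the intermediate $\pi$-cone orbifold by a Schl\"afli-type inequality then gives $\mathrm{vol}(N_n) \ge c\,\mathrm{vol}(M - K_n)$ for a universal $c>0$. Either way $\mathrm{vol}(N_n) \to \infty$.

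It then remains to extract an infinite family. Any set of pseudo-Anosov mapping classes with unbounded mapping-torus volume is infinite: finitely many such classes would have their (closed hyperbolic) mapping tori among finitely many manifolds, contradicting $\mathrm{vol}(N_n)\to\infty$; after passing to a subsequence the $N_n$ are pairwise non-homeomorphic, so the $\psi_n$ are pairwise non-conjugate by Mostow rigidity. Since this argument runs for every $g>g_0$, the corollary follows and Question~\ref{qn} is answered affirmatively. The one place requiring genuine care is the volume control---ensuring that the passage from the hyperbolic fibered knot complement $M-K_n$ to the $2$-fold branched cover $N_n$ does not collapse volume (and, more routinely, that the construction of \cite{HKK} preserves the pseudo-Anosov property and really lands in $\mathfrak{D}_{2g}(M)$); if Theorem~10 of \cite{HKK} packages the volume bound, even this is immediate.
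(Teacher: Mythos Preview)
Your approach is exactly the paper's: the corollary is stated immediately after Theorem~\ref{mainthm} with the sentence ``Combining theorem~\ref{mainthm} and theorem~10 of \cite{HKK} yields the following affirmative answer,'' and no further argument is given. Your additional hedging about the volume comparison and the extraction of an infinite family is more detail than the paper supplies, but it is consistent with the intended one-line deduction.
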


\subsection{Outline of the proof}
The strategy of the proof is to construct for every closed, connected, oriented 3-manifold $M$ and for every sufficiently large genus $g$ of the fiber, a family of fibered hyperbolic knots of genus $g$ with linearly growing volume. We use three main tools to construct these families. The first is the theory of branched covers in dimensions 2 and 3. Using branched covers of the 3-sphere branched over a knot or link in braid position to obtain a fibered link in a 3-manifold goes back to Alexander. Here we use an improvement on Alexander's theorem due to Hilden and Montesinos \cite{Hilden, Montesinos}. This refinement allows us to consider simple 3-fold branched covers. That the branched covers are degree 3 is crucial for control of the number of preimages of the braid axis which will become the fibered link. This control over the degree of the cover comes at the expense of the covers considered by Hilden and Montesinos being irregular. Fortunately these irregular covers have the property of being \emph{simple} (see definition \ref{simple}). Simple branched covers are well studied in dimension 2 (see for example \cite{Fuller, GK, Winarski}). This allows us to control the monodromy of the fibered link.

In section \ref{sec3} we use this control of the monodromy and the second tool, open book decompositions and stabilization, to ensure that our fibered link becomes a fibered knot with pseudo-Anosov monodromy (see section \ref{sec3} for a brief discussion of open book decompositions). To that end we use the equivalence between open book decompositions and fibered links as well as the technique introduced by Colin and Honda in \cite{CH} to transform our fibered 2-component link with possibly reducible monodromy, obtained from the branched cover construction described above, into a fibered knot with pseudo-Anosov monodromy.

Crucially, even after this transformation we maintain the control over the monodromy. This allows us to use our third tool, subsurface projections and Brock's work in \cite{Brock} relating translation distance of a pseudo-Anosov in the pants graph and the volume of the associated mapping torus. The control over the monodromy that we have maintained throughout the construction of these fibered hyperbolic knots ensures that in addition to the monodromies being pseudo-Anosov, they factor as $(A)(S^{-1}F^nS)(F^{-n})$ where $A$ is constant as $n$ varies and $F^{\pm n}$ are pseudo-Anosov on essential subsurfaces. Then we apply the work of Clay--Leininger--Mangahas \cite{CLM} to see that the monodromy has linearly growing subsurface projections to the supports of $F^{-n}$ and $S^{-1}F^nS$. The Masur--Minsky distance formula for the pants graph \cite{MMII} then implies that the monodromies have linearly growing translation distance in the pants graph. Finally, the work of Brock \cite{Brock} implies that this linearly growing translation distance corresponds to linearly growing volume of the associated mapping tori.

\subsection{Acknowledgments}
The author thanks his advisor Dave Futer for pointing him towards this problem as well as his guidance. The author thanks the NSF for its support via grant DMS-1907708. The author also thanks the anonymous referee for comments which expanded and improved the exposition of this paper.

\section{Background}
Throughout the paper $S = S_{g,n}$ denotes an orientable surface of genus $g$ and $n$ boundary components. We will now introduce some conventions and aspects of the study of surfaces and their symmetries that will be useful. The starting point of our consruction will use the theory of braids in the 3-sphere. We leverage the group theoretic structure of braids.

\begin{definition}
    The \textbf{Braid Group on n-strands,} $\mathfrak{B}_n$ is the group of isotopy classes of $n-\text{braids}.$ If $\eta, \psi \in \mathfrak{B}_n,$ then we multiply $\eta$ and $\psi$ by scaling each of their heights by $1/2$ and stack the braid corresponding to $\eta$ on top of the braid corresponding to $\psi.$ The resulting braid is $\eta\cdot \psi \in \mathfrak{B}_n.$
\end{definition}

The braid group, $\mathfrak{B}_n$ is generated by the braids $\sigma_i$ for $1\leq i \leq n-1$ where the only crossing is the $(i+1)^{th}$ strand passing over the $i^{th}$ strand (see figure \ref{fig:braid_gen}).

\begin{figure}
    \centering
    \begin{overpic}[width=4cm,]{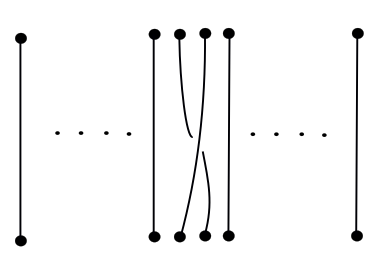}
    \put(4,65){\tiny 1}
    \put(94,65){\tiny $n$}
    \put(46,65){\tiny i}
    \put(50,65){\tiny i+1}
        
    \end{overpic}
    \caption{The generator $\sigma_i$ of the braid group on $n$ strands.}
    \label{fig:braid_gen}
\end{figure}

\subsection{Mapping Class Group Basics}

\begin{definition}
    The \textbf{Mapping Class Group} of a surface, $\Mod(S_{g,n}),$ is $\text{Homeo}^+(S,\partial S) / \text{isotopy}.$ This is the group of orientation preserving homeomorphisms of the surface $S_{g,n}$ that restrict to the identity on the boundary, up to isotopy. 
\end{definition}

For composing elements of $\Mod(S_{g,n}),$ called mapping classes, we us funtional notation. That is for $\varphi, \psi \in \Mod(S_{g,n})$ the composition $\varphi \circ \psi$ refers to applying $\psi$ and then applying $\varphi.$

\begin{remark}
    We note that we will occasionally take the perspective that $\mathfrak{B}_n$ is $\Mod(D_n)$ the mapping class group of the disk with $n$ punctures. See chapter 9 of \cite{primer}.
\end{remark}

A simple example of a non-trivial element of the mapping class group is a \textbf{Dehn twist.}
Let $\alpha \subseteq S$ be an essential simple closed curve on a surface $S.$ The Dehn twist about $\alpha,$ denoted $T_{\alpha},$ is the mapping class obtained by cutting $S$ along $\alpha,$ twisting a neighborhood of one boundary component by an angle of $2\pi$, and then regluing the surface. See figure \ref{fig:dehn_twist} for an illustration. Dehn twists are an important aspect of the theory of mapping class groups that we will use extensively. In fact, $\Mod(S)$ is generated by a finite collection of Dehn twists about simple closed curves. See chapter 3 of \cite{primer} for an extensive discussion of Dehn twists.

\begin{figure}[H]
    \centering
    \begin{overpic}[width=5cm,]{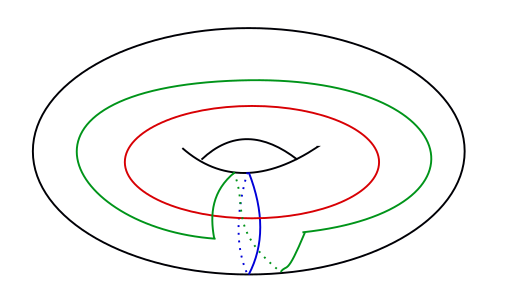}
    \put(45,47){\color{ao(english)}\tiny $T_{\alpha}(\beta)$}
    \put(65,25){\color{red} \tiny $\beta$}
    \put(45,2){\color{blue} \tiny $\alpha$}
        
    \end{overpic}
    \caption{The action of the Dehn twist $T_{\alpha}$ on the curve $\beta.$}
    \label{fig:dehn_twist}
\end{figure}

In studying the mapping class group we consider different classes of mapping classes depending on their action on the surface. We say that a mapping class, $f\in \Mod(S_{g,n})$ is \textbf{reducible} if there is a nonempty set of isotopy classes of mutually disjoint, simple closed curves, $\{a_1, \dots, a_n\}$ such that $\{a_1, \dots, a_n\} = \{f(a_1), \dots, f(a_n)\}.$ We say that a mapping class is \textbf{periodic} if it is finite order. We say that a mapping class $f\in \Mod(S_{g,n})$ is \textbf{pseudo-Anosov} if there exists a pair of transverse measured foliations $(\mathcal{F}^u, \mu_u)$ and $(\mathcal{F}^s, \mu_s)$ on $S_{g,n},$ a number $\lambda > 1,$ and a representative homeomorphism $\varphi$ such that
\[\varphi (\mathcal{F}^u, \mu_u) = (\mathcal{F}^u, \lambda \mu_u) \quad \text{and} \quad \varphi (\mathcal{F}^s, \mu_s) = (\mathcal{F}^s, \lambda^{-1} \mu_s).\] 
See chapter 13 of \cite{primer} for more discussion of the three types of mapping classes. The following theorem says that these three classes characterize all elements of the mapping class group.

\begin{theorem}\textbf{(Nielsen-Thurston Classification)}
    Every mapping class in $\Mod(S_{g,n})$ is either periodic, reducible, or pseudo-Anosov. Moreover, pseudo-Anosov mapping classes are neither periodic nor reducible.
\end{theorem}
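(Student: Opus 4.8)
The plan is to study the action of $\Mod(S_{g,n})$ on the Teichm\"uller space $\mathcal{T}(S_{g,n})$ together with Thurston's compactification $\overline{\mathcal{T}(S_{g,n})} = \mathcal{T}(S_{g,n}) \cup \mathcal{PMF}(S_{g,n})$, where $\mathcal{PMF}(S_{g,n})$ is the space of projective measured foliations (here we assume $\chi(S_{g,n})<0$, as the classification is only interesting in that range). For such surfaces this compactification is homeomorphic to a closed ball of dimension $6g-6+2n$, and the action of any $f\in \Mod(S_{g,n})$ on $\mathcal{T}(S_{g,n})$ extends to a homeomorphism of this closed ball. By the Brouwer fixed point theorem $f$ has a fixed point, and the three cases of the classification will correspond to where this fixed point lies and to the structure of the foliation it represents.

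First I would handle a fixed point in the interior $\mathcal{T}(S_{g,n})$: such a point is a complete finite-area hyperbolic structure $X$ on $S_{g,n}$ preserved by $f$, so $f$ is represented by an isometry of $X$. Since the isometry group of a complete finite-area hyperbolic surface is finite, $f$ has finite order, i.e.\ $f$ is periodic. Next, suppose $f$ has no interior fixed point, so every fixed point lies on the boundary and $f$ fixes the projective class of a measured foliation $\mathcal{F}$, with $f(\mathcal{F}) = \lambda \mathcal{F}$ for some $\lambda > 0$. I would split into two subcases according to whether $\mathcal{F}$ fills $S_{g,n}$. If it does not fill --- equivalently $\mathcal{F}$ has a leaf that is an essential simple closed curve, or a complementary region that is not a disk or once-punctured disk --- then the isotopy classes of such closed leaves, or of the boundary curves of a regular neighborhood of the filling (minimal) subsurfaces of $\mathcal{F}$, form a nonempty $f$-invariant system of disjoint essential simple closed curves, so $f$ is reducible. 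If $\mathcal{F}$ fills, one argues $\lambda \neq 1$ (otherwise $f$ would fix a Teichm\"uller geodesic pointwise and be periodic, contrary to assumption), applies the same analysis to $f^{-1}$ to obtain a second filling foliation $\mathcal{G}$ with $f(\mathcal{G}) = \mu \mathcal{G}$ and $\mu \neq 1$, and checks that $i(\mathcal{F},\mathcal{G}) \neq 0$ so that $\mathcal{F}$ and $\mathcal{G}$ can be put in transverse position. Then $(\mathcal{F},\mathcal{G})$ together with the fact that $f$ scales their transverse measures by reciprocal factors exhibits $f$ as pseudo-Anosov. I expect this last construction --- producing the transverse pair of invariant filling foliations and the singular flat structure realizing them --- to be the main obstacle, as it is the technical heart of Thurston's theorem.

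Finally, the "moreover" clause follows from the dynamics of pseudo-Anosov maps. If $f$ is pseudo-Anosov with dilatation $\lambda > 1$, then $f^n$ scales the transverse measure $\mu_u$ by $\lambda^n \neq 1$, so $f^n \neq \mathrm{id}$ for all $n \geq 1$ and $f$ is not periodic. Suppose $f$ were reducible, fixing a finite nonempty system of disjoint essential simple closed curves; replacing $f$ by a power $f^n$ we may assume $f^n(a) = a$ for some essential simple closed curve $a$. Since geometric intersection number is invariant under homeomorphisms, $i(\mathcal{F}^u, a) = i\bigl(f^n(\mathcal{F}^u), f^n(a)\bigr) = i(\lambda^n \mathcal{F}^u, a) = \lambda^n\, i(\mathcal{F}^u, a)$, which forces $i(\mathcal{F}^u, a) = 0$. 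But $\mathcal{F}^u$ fills $S_{g,n}$, so $i(\mathcal{F}^u, a) > 0$, a contradiction. Hence a pseudo-Anosov mapping class is neither periodic nor reducible, which completes the proof.
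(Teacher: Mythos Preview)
The paper does not prove this theorem; it is stated as background material with a pointer to chapter~13 of \cite{primer}. So there is no in-paper argument to compare against.

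Your sketch follows the standard Thurston approach via the Brouwer fixed point theorem on the Thurston compactification, and the overall architecture is correct. One soft spot worth flagging beyond the one you already identify: your argument that $\lambda \neq 1$ in the filling case is not quite right as stated. Fixing a single filling measured foliation $\mathcal{F}$ with $\lambda = 1$ does not by itself produce a Teichm\"uller geodesic fixed by $f$---a Teichm\"uller geodesic requires a \emph{pair} of transverse measured foliations, which is exactly what you are trying to construct. The usual way to handle the case $f(\mathcal{F}) = \mathcal{F}$ with $\mathcal{F}$ filling is to observe that the complementary regions of $\mathcal{F}$ are finitely many ideal polygons (and once-punctured polygons), that $f$ permutes them and their separatrices, and that the combinatorics together with preservation of the measured foliation force some power of $f$ to be isotopic to the identity. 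Alternatively one can sidestep this by running Bers' variant of the proof (minimizing Teichm\"uller translation length), where the trichotomy ``infimum zero and realized / zero and not realized / positive'' cleanly separates periodic, reducible, and pseudo-Anosov. Your treatment of the ``moreover'' clause is correct.
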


An interesting sub-class of reducible mapping classes are \textbf{partial pseudo-Anosovs.}
A mapping class $f \in \Mod(S)$ is called a partial pseudo-Anosov if it satisfies the following conditions.
\begin{enumerate}
    \item There exists a subsurface $S'\subseteq S$ such that $f|_{S'}$ is a pseudo-Anosov.
    \item $f|_{S - S'}$ is isotopic to the identity.
\end{enumerate}
In this case, we call $S'$ the support of $f.$

\subsection{The Curve Graph}\label{sec: curves}
Given two isotopy classes of arcs or curves $a,b \subseteq S,$ the \emph{geometric intersection number} of $a$ and $b$ is defined to be the minimal number of intersections between any pair of curves or arcs representing each isotopy class. That is,
\[i(a,b) = \min_{\alpha\in a, \beta\in b} \lvert \alpha \cap \beta \rvert\]

\begin{definition}
    Let $3g+n \geq 5.$ Then the \textbf{Curve Graph of a surface $S = S_{g,n},$} denoted $\mathcal{C}(S),$ is the simplicial graph with vertices corresponding to isotopy classes of essential simple closed curves and whose edges represent pairs of curves that can be realized disjointly on $S.$ Note that there is an adaption of the definition for surfaces of lower complexity by modifying the edge condition, but all of the surfaces we discuss here satisfy the complexity condition (see \cite[Chapter 4]{primer}).
\end{definition}

Note that we will use $\mathcal{C}^{(0)}(S)$ to denote the $0-$skeleton of $\mathcal{C}(S).$ We endow $\mathcal{C}(S)$ with a metric by assigning each edge length $1.$ With this metric, $d_S,$ the curve graph is a geodesic metric space. There is a natural action of $\Mod(S),$ by isometries, on $\mathcal{C}(S).$ Given $f\in\Mod(S)$ we define the \textit{translation distance of f, $\tau_{\mathcal{C}}(f)$} as:
\[\tau_{\mathcal{C}}(f) = \inf_{\eta\in\mathcal{C}^{(0)}(S)}\{d_S(\eta, f(\eta)\}\]

We now state the following lemma originally due to Hempel in \cite{Hempel}.

\begin{lemma} \label{Hempel_lemma}
    Let $\alpha,\beta \subseteq S$ be simple closed curves representing vertices of $\mathcal{C}(S),$ $a$ and $b$ respectively. Then
    \[d_S(a,b) \leq 2\log_2(i(\alpha,\beta)) + 2\]
\end{lemma}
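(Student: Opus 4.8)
The plan is to prove the bound $d_S(a,b) \leq 2\log_2(i(\alpha,\beta)) + 2$ by induction on $i(\alpha,\beta)$, following Hempel's original argument. First I would handle the base cases: if $i(\alpha,\beta) = 0$ then $a = b$ or $a,b$ are disjoint, so $d_S(a,b) \leq 1 \leq 2$; and if $i(\alpha,\beta) = 1$ then $d_S(a,b) \leq 2$, which one checks directly (two curves meeting once lie in a once-punctured torus or four-holed sphere neighborhood, and one can always find a third curve disjoint from both, or note that the regular neighborhood of $\alpha \cup \beta$ has a boundary curve disjoint from each). These base cases anchor the induction since $2\log_2(1) + 2 = 2$.

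For the inductive step, suppose $i(\alpha,\beta) = m \geq 2$ and assume the bound holds for all pairs with fewer intersections. The key idea is a surgery construction: isotope $\alpha$ and $\beta$ into minimal position, then take two adjacent intersection points along $\beta$ and perform a surgery on $\alpha$ using the subarc of $\beta$ between them. This produces (at least) one essential simple closed curve $\gamma$ with $i(\gamma,\beta) \leq \lceil m/2 \rceil$ and with $\gamma$ disjoint from $\alpha$ (so $d_S(a,c) \leq 1$), where $c$ is the class of $\gamma$. More precisely, resolving all the intersections of $\alpha$ with $\beta$ in a coherent way along $\beta$ and counting shows one of the resulting surgered curves meets $\beta$ in at most half as many points. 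Then by the triangle inequality and the inductive hypothesis,
\[
d_S(a,b) \leq d_S(a,c) + d_S(c,b) \leq 1 + 2\log_2\left(\left\lceil \tfrac{m}{2}\right\rceil\right) + 2 \leq 1 + 2\log_2\left(\tfrac{m+1}{2}\right) + 2,
\]
and one checks that $1 + 2\log_2((m+1)/2) \leq 2\log_2(m)$ for $m \geq 2$, which gives the claimed inequality. (A cleaner variant: arrange the surgered curve to satisfy $i(\gamma,\beta) \leq \lfloor m/2 \rfloor$ when $m$ is even, and handle the small odd cases separately, so that $1 + 2\log_2(m/2) = 2\log_2(m) - 1 + 1 = 2\log_2(m) \leq 2\log_2(m) + 2$ comfortably.)

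The main obstacle is the surgery step: one must verify that among the curves obtained by resolving the intersections of $\alpha$ along arcs of $\beta$, at least one is essential (not null-homotopic and not boundary-parallel) and genuinely reduces the intersection number with $\beta$ by roughly a factor of two. Essentiality can fail for individual surgered curves, so the argument needs that not all of them are inessential — this is where one uses that $\alpha$ and $\beta$ are both essential and in minimal position, typically via a change-of-coordinates or an Euler characteristic / bigon-criterion argument on the subsurface filled by $\alpha \cup \beta$. Handling the arcs-case (when $a$ or $b$ may be represented by arcs rather than curves, as the lemma statement permits via the reference to Section~\ref{sec: curves}) requires the analogous surgery in the arc-and-curve graph, but the combinatorics and the logarithmic bookkeeping are identical.
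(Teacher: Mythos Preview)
The paper does not prove this lemma at all: it is stated as a known result ``originally due to Hempel'' with a citation to \cite{Hempel}, and no argument is given. So there is nothing to compare your proof against in the paper itself.

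Your sketch follows the standard surgery/induction argument that is indeed essentially Hempel's. A couple of small points are worth cleaning up. First, the arithmetic claim $1 + 2\log_2((m+1)/2) \leq 2\log_2(m)$ fails at $m=2$ (the left side is about $2.17$, the right side is $2$); your parenthetical ``cleaner variant'' with $\lfloor m/2 \rfloor$ for even $m$ is what actually makes the induction close, so that should be the main line rather than an aside. Second, your final paragraph about an ``arcs-case'' is unnecessary here: the lemma as stated concerns only simple closed curves, so there is no arc version to handle. The genuine content you correctly flag --- that at least one of the two surgered curves obtained from an innermost $\beta$-arc is essential, and that the two surgered curves together account for at most $m$ intersections with $\beta$, so one has at most $\lfloor m/2 \rfloor$ --- is the heart of the proof and should be stated explicitly rather than deferred as an ``obstacle.''
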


The theory of subsurface projections plays a central role in the theory of mapping class groups, and we introduce the basics of this here. For more discussion of subsurface projections, see section 2 of \cite{MMII}. An \textit{essential subsurface} $Y\subseteq S$ is a subsurface such that the interior of $Y$ embeds in $S$ so that each component of $\partial Y$ maps to either an essential simple closed curve in $S$ or a component of $\partial S.$ A \textit{non-annular} subsurface is a subsurface that is not homeomorphic to an annulus. An essential simple closed arc or curve, $\eta,$ is in \emph{minimal position} with $Y\subseteq S$ when $\eta$ is in minimal position with all of the components of $\partial Y.$ That is, $i(\eta,C)$ is minimal for all components $C$ of $\partial Y.$

Given $Y\subseteq S$ a non-annular essential subsurface, the \emph{subsurface projection}
\[\pi_Y: \mathcal{C}(S) \longrightarrow \mathcal{C}(Y)\]
takes a vertex $a\in \mathcal{C}^{(0)}(S)$ to a subset of $\mathcal{C}(Y)$ as follows. First, put $a$ and $Y$ in minimal position. If $a\subseteq Y$ then $\pi_Y(a) = a\in \mathcal{C}(Y).$ If $a$ intersects $Y$ in a collection of arcs we associate to the collection of arcs a collection of essential simple closed curves as described below. Suppose $\alpha$ is an arc in $Y.$ We associate to $\alpha$ a collection of pairwise disjoint simple closed curves in $Y,$ denoted $\overline{\alpha}.$ Concretely, let $N(\alpha)$ denote a thickening of the union of $\alpha$ with the (at most two) components of $\partial Y$ that it meets. We then define $\overline{\alpha}$ to be the essential components of $\partial N(\alpha).$ See figure \ref{fig:aos} for an example where the arc meets two boundary components.
Thus, if $a$ intersects $Y$ in a collection of arcs, we define $\pi_Y(a)$ to be the collection of essential simple closed curves $\overline{a}$ obtained as described above.
Now, given $a,b\in\mathcal{C}^{(0)}(S)$ we define $d_Y(a,b)$ by
\[d_Y(a,b) = \text{diam}_{\mathcal{C}(Y)}(\pi_Y(a), \pi_Y(b))\]
Finally, we note that $\pi_Y$ is coarsely Lipschitz (lemma 2.3 of \cite{MMII}):
\[d_Y(a,b) \leq 2\cdot d_S(a,b)+2.\]

The following is known as the bounded geodesic image theorem. It is due to Masur and Minsky in \cite{MMII} and appears as Theorem 3.1 there.

\begin{theorem}\textbf{(Bounded Geodesic Image Theorem)} \label{bgit}
    There exists a constant $M$ depending only on $\chi(S)$ such that the following holds. Let $Y\subseteq S$ be an essential subsurface. If $\xi$ is a geodesic in $\mathcal{C}(S)$ all of whose vertices represent curves which intersect $Y$ nontrivially, then $\text{diam}(\pi_Y(\xi)) \leq M.$ 
\end{theorem}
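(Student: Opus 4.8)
This is Theorem~3.1 of \cite{MMII}, so here I will only describe the approach I would take rather than reproduce the argument. The plan is to first reduce to a statement about a single geodesic with fixed endpoints: it suffices to bound $d_Y(v_i,v_j)$ by a constant depending only on $\chi(S)$ for every pair of vertices $v_i,v_j$ of $\xi$, since $\mathrm{diam}(\pi_Y(\xi))$ is, up to an additive error coming from the bounded diameter of each individual $\pi_Y(v_k)$, the supremum of these distances; and because every subpath of a geodesic is a geodesic, it is enough to treat $v_i=v_0$, $v_j=v_n$ for an arbitrary geodesic $\xi=(v_0,\dots,v_n)$ all of whose vertices meet $Y$. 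Consecutive vertices $v_k,v_{k+1}$ are disjoint in $S$, so their traces on $Y$ are disjoint and hence $d_Y(v_k,v_{k+1})\le 2$; thus $\pi_Y(v_0),\dots,\pi_Y(v_n)$ is a path in $\mathcal C(Y)$ and the triangle inequality already gives $d_Y(v_0,v_n)\le 2n$. The entire content of the theorem is to upgrade this length bound to a bound independent of $n$, exploiting that $\xi$ is a \emph{geodesic}, not merely a path.

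I would argue by contradiction, assuming $d_Y(v_0,v_n)=D$ is large, and bring in Gromov hyperbolicity in one of two ways. The Teichm\"uller-geodesic route, following Masur and Minsky: realize $\xi$ coarsely by a Teichm\"uller geodesic $\mathcal G$ between thick-part points associated to $v_0$ and $v_n$; the hypothesis that every vertex of $\xi$ meets $Y$ says, via the systole map being coarsely Lipschitz onto $\mathcal C(S)$, that $\partial Y$ is never the systole along $\mathcal G$, i.e. $\mathcal G$ avoids the region of moduli space where $\partial Y$ is short, and then Minsky's product-region estimates (with Rafi's refinement) bound the variation of the $Y$-coordinate of $\mathcal G$, hence $\mathrm{diam}(\pi_Y(\mathcal G))$, hence $\mathrm{diam}(\pi_Y(\xi))$, by a constant depending only on $\chi(S)$. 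A more combinatorial route, which I find cleaner, is to use uniform hyperbolicity of $\mathcal C(Y)$ and of the arc-and-curve complex $\mathcal{AC}(Y)$ — with constant independent of $Y$, via the unicorn/bicorn-path technology of Hensel--Przytycki--Webb — and then run Webb's surgery argument: if $D$ were large one produces, from a $\mathcal C(Y)$-geodesic between $\pi_Y(v_0)$ and $\pi_Y(v_n)$ (whose vertices are curves supported in $Y$ and therefore pairwise within $\mathcal C(S)$-distance $2$), a path from $v_0$ to $v_n$ that shortcuts $\xi$ through a curve disjoint from $Y$, contradicting either geodicity of $\xi$ or the hypothesis that all of its vertices meet $Y$.

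I expect the main obstacle to be exactly this last step: converting the local, coarsely-Lipschitz control of $\pi_Y$ along $\xi$ into a global diameter bound that is moreover uniform over all essential subsurfaces $Y$. Hyperbolicity is genuinely essential here — the bounded-geodesic-image phenomenon fails for non-hyperbolic analogues — so any correct argument must feed hyperbolicity of $\mathcal C(S)$ or $\mathcal C(Y)$ into the surgery/shortcut (or thick--thin) mechanism in a real way, and forcing the constant $M$ to depend only on $\chi(S)$ is what compels one to work with a model (Teichm\"uller geodesics, or unicorn paths) whose hyperbolicity constant does not degenerate as the topological type of $Y$ varies. The annular case of subsurface projection requires a separate but parallel argument; it does not arise here, since $\pi_Y$ was only set up above for non-annular $Y$.
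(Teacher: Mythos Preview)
The paper does not prove this statement; it simply cites it as Theorem~3.1 of \cite{MMII} and moves on. Your proposal correctly identifies this and then goes well beyond what the paper does by sketching two standard routes to the result (the Teichm\"uller-geodesic approach of Masur--Minsky and the combinatorial unicorn-path approach of Hensel--Przytycki--Webb and Webb). Since there is no proof in the paper to compare against, there is nothing to adjudicate: your sketch is a reasonable outline of the known arguments, and the only thing to add is that for the purposes of this paper a bare citation suffices.
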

\begin{remark}\label{bgit_rmk}
    We will use the contrapositive of the above: If the projected image of $\xi$ to $\mathcal{C}(Y)$ is larger than $M$, then there is a vertex on the geodesic, $\xi,$ that represents a curve disjoint from $Y.$ In particular, said vertex has distance 1 to $\partial Y.$ 
\end{remark}

\subsection{The Pants Graph}
We will use another graph associated to the curves on a surface called the pants graph. Before we define the graph we need the following definitions.

A \textit{pair of pants} is a compact surface of genus 0 with three boundary components. Let $S$ be a surface with $\chi(S)<0.$ A \textit{pants decomposition} of $S$ is a collection of disjoint simple closed curves on $S$ such that the result of cutting $S$ along these curves results in a disjoint union of pairs of pants. Two pants decompositions, $P, P'\subseteq S$ are related by an \textit{elementary move} if $P'$ can be obtained from $P$ by replacing a curve $\alpha\in P$ with a curve $\beta \neq \alpha$ such that $i(\alpha, \beta)$ is minimal over all choices of $\beta$ (i.e. over all $\beta$ such that replacing $\alpha$ with $\beta$ results in a pants decomposition). We are now equipped to define the pants graph.

\begin{definition}
    The \textbf{Pants Graph of $S$,} denoted $\mathcal{P}(S),$ is the graph with vertices corresponding to isotopy classes of pants decompositions of $S$ and whose edges connect pants decompositions which differ by an elementary move.
\end{definition}
The graph, $\mathcal{P}(S)$ also carries a natural metric, $d_{\textbf{P}},$ by assigning each edge length 1. Again, the mapping class groups naturally acts by isometries on the pants graph. Using this metric structure we can define the translation distance of a mapping class on the pants graph, $\tau_{\textbf{P}}(f),$ in the same way as in the curve graph.

The following is an important theorem which relates the distance between pants decompositions in the curve graph of a subsurface to distance in the pants graph. It is commonly referred to as the Masur--Minsky distance formula for the pants graph, and appears originally as theorem 6.12 of \cite{MMII}. It appears, as written, as theorem 4.4 of \cite{Brock2}.

\begin{theorem}\label{MM_dist}
    There is a constant, $M_0 = M_0(S) >0$ such that, given $M \geq M_0,$ there are constants $e_0, e_1,$ for which, if $P,P'\subseteq S$ are any two pants decompositions of $S,$ then
    \[e_0^{-1}d_{\textbf{P}}(P,P') - e_1 \leq \sum d_Y(P,P') \leq e_0d_{\textbf{P}}(P,P') + e_1,\]
    where the sum is taken over all non-annular subsurfaces, $Y\subseteq S,$ such that $d_Y(P,P') \geq M.$
\end{theorem}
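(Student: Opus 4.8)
The plan is to follow Masur--Minsky's proof of the distance formula for the marking complex (\cite{MMII}, Theorem 6.12), repackaged for the pants graph as in \cite{Brock2}; since the statement is quoted essentially verbatim from those sources, the goal here is only to outline the argument. There are two inequalities to establish, and they are of very different character: the right-hand inequality is elementary, while the left-hand one carries all the weight.

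For the upper bound $\sum_Y d_Y(P,P') \lesssim d_{\textbf{P}}(P,P')$, I would fix a geodesic $P = P_0, P_1, \dots, P_k = P'$ in $\mathcal{P}(S)$ with $k = d_{\textbf{P}}(P,P')$. Consecutive decompositions $P_i$ and $P_{i+1}$ differ by an elementary move, so they share all but one curve; hence for every non-annular essential subsurface $Y$ the projections $\pi_Y(P_i)$ and $\pi_Y(P_{i+1})$ lie within uniformly bounded distance in $\mathcal{C}(Y)$ (the common curves project identically, and the two versions of the changed curve intersect minimally, so the coarse-Lipschitz bound $d_Y(a,b)\le 2\,d_S(a,b)+2$ controls the rest). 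A counting argument — only the boundedly many subsurfaces into which $P_i \cup P_{i+1}$ projects with large diameter contribute anything substantial to a single step — shows that the total contribution of step $i$ to $\sum_Y d_Y(\cdot,\cdot)$ is at most a constant depending only on $\chi(S)$. Summing over $i$, using the triangle inequality in each $\mathcal{C}(Y)$, and using the threshold $M$ to discard the finitely many terms below $M$, yields $\sum_{Y \colon d_Y(P,P') \ge M} d_Y(P,P') \le e_0\, d_{\textbf{P}}(P,P') + e_1$.

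For the lower bound $d_{\textbf{P}}(P,P') \lesssim \sum_Y d_Y(P,P')$ — the substantive half — the strategy is to build an efficient path in $\mathcal{P}(S)$ from $P$ to $P'$ whose length is bounded above by the right-hand side. Following Masur--Minsky, I would construct a \emph{hierarchy of tight geodesics} joining $P$ to $P'$: a family of tight geodesics in the curve graphs $\mathcal{C}(Y)$ of various essential subsurfaces $Y$, organized by forward and backward subordinacy. The Bounded Geodesic Image Theorem (Theorem \ref{bgit}) is exactly what guarantees that, once $M$ is at least the BGIT constant, every subsurface $Y$ with $d_Y(P,P') \ge M$ appears as the domain of a geodesic in the hierarchy whose length is coarsely $d_Y(P,P')$. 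Resolving the hierarchy into a sequence of elementary moves then produces a path in $\mathcal{P}(S)$ whose length is coarsely the sum of the lengths of the constituent geodesics, i.e. coarsely $\sum_Y d_Y(P,P')$. Combining the two inequalities and absorbing all constants into a single pair $e_0 = e_0(S),\, e_1 = e_1(S)$ valid for each fixed $M \ge M_0$ gives the stated two-sided estimate. The main obstacle is precisely the existence, control, and length-efficient resolution of the hierarchy — the technical core of \cite{MMII} — which in a self-contained treatment would dominate the proof, but which is imported wholesale here.
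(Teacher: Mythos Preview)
The paper does not prove this theorem at all: it is stated as a background result and attributed to Masur--Minsky \cite{MMII} (Theorem 6.12) and Brock \cite{Brock2} (Theorem 4.4), with no proof or sketch given. Your outline is a faithful summary of the original Masur--Minsky argument from those cited sources, so there is nothing in the paper to compare it against; if anything, you have supplied more than the paper does.
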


The primary use we have for the pants graph is the close relation between the geometry of a mapping torus and the action of the monodromy on the pants graph. In particular, we make use of the following theorem of Brock (see \cite{Brock}).
\begin{theorem}\label{brock}
    Given $S$ there is a constant $K>1$ so that if $M_f$ is the mapping torus of a pseudo-Anosov, $f\in \Mod(S),$ then
    \[K^{-1}\tau_{\textbf{P}}(f) \leq vol(M_f) \leq K\tau_{\textbf{P}}(f).\]
\end{theorem}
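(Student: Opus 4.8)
The plan is to prove the two inequalities by genuinely different methods, organized around a companion theorem (also due to Brock): the pants graph is quasi-isometric to Teichm\"uller space with the Weil--Petersson metric, $\mathcal{P}(S) \qi (\mathcal{T}(S), d_{WP})$, with constants depending only on $S$. Since the Weil--Petersson completion is CAT(0) and a pseudo-Anosov $f$ acts on it with a geodesic axis, its translation length $\|f\|_{WP}$ is realized, and transporting an axis across the quasi-isometry shows $\tau_{\textbf{P}}(f)$ and $\|f\|_{WP}$ are comparable up to multiplicative constants depending only on $S$; the additive quasi-isometry error is harmless after replacing $f$ by a power, using $\mathrm{vol}(M_{f^k}) = k\,\mathrm{vol}(M_f)$ and $\|f^k\|_{WP} = k\|f\|_{WP}$. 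So it suffices to prove $\mathrm{vol}(M_f) \asymp \|f\|_{WP}$, and in fact each inequality only up to an additive constant depending on $S$.

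For the upper bound $\mathrm{vol}(M_f) \le K\tau_{\textbf{P}}(f)$ I would build a combinatorial model for $M_f$ from a pants geodesic. Choose $P$ realising $\tau_{\textbf{P}}(f)$ up to bounded error and a geodesic $P = P_0, P_1, \dots, P_n = f(P)$ of elementary moves with $n$ coarsely equal to $\tau_{\textbf{P}}(f)$, and glue $P_n$ back to $P_0$. Each elementary move is supported on a one-holed torus or four-holed sphere, so there are only finitely many local pictures up to homeomorphism; assign to each a hyperbolic building block of volume at most a universal $v(S)$, glue these along $S \times I$ pieces and drill the curves being swapped, obtaining a cusped manifold $N$ with $\mathrm{vol}(N) \le v(S)\,n$ whose appropriate Dehn filling is $M_f$. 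As Dehn filling does not increase volume, $\mathrm{vol}(M_f) \le v(S)\,n \le K\tau_{\textbf{P}}(f)$. (Equivalently one can route this through quasi-Fuchsian manifolds: $\mathrm{vol}(\mathrm{core}\,Q(X, f^n X)) \le K' d_{WP}(X, f^n X) + K''$, and as $n\to\infty$ these converge geometrically to the doubly degenerate $\mathbb{Z}$-cover $S\times\mathbb{R}$ of $M_f$, so $\mathrm{vol}(\mathrm{core}\,Q(X, f^n X)) \ge (n - o(n))\,\mathrm{vol}(M_f)$; divide by $n$ and let $n\to\infty$.)

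For the lower bound $K^{-1}\tau_{\textbf{P}}(f) \le \mathrm{vol}(M_f)$ I would argue by sweep-outs. Lift the fibration $M_f \to S^1$ to $\tilde M_f = S\times\mathbb{R} \to \mathbb{R}$ and take a proper, equivariant family of pleated (or minimal) surfaces $g_t \colon S \to \tilde M_f$, each of area at most $2\pi|\chi(S)|$ by Gauss--Bonnet, with $g_{t+1}$ the $f$-translate of $g_t$. For each $t$ let $P_t$ be a pants decomposition of curves short in the intrinsic metric on $g_t(S)$ (these exist with a uniform Bers-type bound). The key estimate is that for a uniform $\epsilon = \epsilon(S)$, if $|t-t'|\le\epsilon$ then $P_t$ and $P_{t'}$ differ by a uniformly bounded number of elementary moves: the region of $\tilde M_f$ cobounded by $g_t(S)$ and $g_{t'}(S)$ either has definite volume, or lies in the Margulis-thick part, where a uniformly bilipschitz picture forces the two short-curve systems to be within bounded pants distance. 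Partitioning a fundamental interval into pieces of length $\epsilon$ and summing — the swept regions have total volume $\mathrm{vol}(M_f)$ — gives $d_{\textbf{P}}(P_0, f(P_0)) \le C(S)\,\mathrm{vol}(M_f) + C(S)$, hence $\tau_{\textbf{P}}(f) \le K\,\mathrm{vol}(M_f)$ after passing to a power to kill the additive term.

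The main obstacle throughout is keeping every constant dependent only on the topological type of $S$, not on $f$: this is exactly where one needs a Margulis constant for hyperbolic $3$-manifolds, the thick--thin decomposition, Bers constants for $S$, uniform control on how far a pleated surface can slide across a thin Margulis tube, and uniformity in the geometric convergence of the quasi-Fuchsian approximants. The remaining inputs — Gauss--Bonnet area bounds, monotonicity of volume under Dehn filling, CAT(0) geometry of the Weil--Petersson completion and existence of pseudo-Anosov axes — are standard.
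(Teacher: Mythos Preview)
The paper does not prove this statement: it is quoted as a background theorem of Brock, with a citation to \cite{Brock}, and is used as a black box in the final proposition. There is therefore no ``paper's own proof'' to compare your proposal against.

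For what it is worth, your sketch is a faithful outline of Brock's actual argument in \cite{Brock}. The upper bound is indeed obtained by converting a pants-graph geodesic into a bounded-volume block decomposition and appealing to the fact that Dehn filling does not increase volume (Brock routes this through the quasi-Fuchsian convex-core volume and geometric convergence, exactly as in your parenthetical alternative). The lower bound is obtained by sweeping the doubly degenerate cover by pleated surfaces, extracting Bers-short pants decompositions, and using a bounded-diameter argument to show that the induced path in $\mathcal{P}(S)$ has length controlled by the swept volume. Your identification of the uniformity issues (Margulis constant, Bers constants, control on pleated surfaces in thin parts) as the delicate points is also accurate. So the proposal is correct in spirit, but it is answering a question the present paper never poses.
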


\subsection{Geodesic Laminations}
We now introduce some of the theory of geodesic laminations that we will use later. For a thorough exposition of the theory, the reader should see \cite{Bonahon, CB, FLP}.

\begin{definition}
    Fix a surface, $S,$ with $\chi(S)<0$ and fix a hyperbolic metric on $S.$ A \textbf{geodesic measured lamination}, $L,$ on $S$ is a non-empty collection of disjoint simple geodesics whose union is closed, along with a transverse measure that is invariant as you flow along the geodesics.
\end{definition}
We denote the space of geodesic measured laminations on $S$ by $\mathcal{ML}(S).$ We now also define the space of projective measured laminations, $\mathcal{PML}(S),$ as the space $\mathcal{ML}(S)$ modulo the scaling of measures. We now require the following definitions.
\begin{definition}
A geodesic lamination $L$ is \textbf{minimal} if $L$ does not contain any proper sublaminations. A lamination, $L,$ is \textbf{filling} if every component of $S - L$ is either an ideal polygon or a once punctured ideal polygon.
\end{definition}

We now give a second, equivalent, definition of what it means for a homeomorphism to be pseudo-Anosov.

\begin{definition}
  A homeomorphism, $f,$ of $S$ is pseudo-Anosov if it preserves a pair of transverse measured laminations on $S,$ one expanding and one contracting. We call these laminations the \textbf{unstable} and \textbf{stable laminations} of $f$ respectively.
\end{definition}

Recall that, at the beginning of this section, a pseudo-Anosov homeomorphism was defined in terms of stable and unstable foliations rather than the stable and unstable laminations as above. In fact, there is an exact correspondence between measured laminations and measured foliations. Further, the stable lamination of a pseudo-Anosov, $f,$ corresponds to the stable foliation of $f$ under this correspondence. For a precise treatment of the correspondence, see \cite{Levitt}.

Masur and Minsky showed in \cite{MMI} that the curve graph is Gromov hyperbolic. Hence, we may consider the boundary at infinity, $\partial_{\infty}\mathcal{C}(S).$ Klarreich showed in \cite{Klar} that $\partial_{\infty}\mathcal{C}(S)$ is the space of minimal filling laminations, $\mathcal{EL}(S)\subseteq \mathcal{ML}(S).$ Moreover, a sequence of curves $\alpha_i \in \mathcal{C}(S)$ converges to a point $\alpha\in \partial_{\infty}\mathcal{C}(S)$ if and only if the sequence converges to $\alpha \in \mathcal{EL}(S)$ equipped with the subspace topology inherited from $\mathcal{PML}(S).$

We also note here that the geometric intersection number as defined above for arcs and curves extends uniquely to a continuous, homogeneous function $i: \mathcal{ML}(S) \times \mathcal{ML}(S) \longrightarrow \mathbb{R}$ that we also call the geometric intersection number (see \cite[chapter 9]{ThurstonNotes}).

\section{Branched Cover Construction}
Let $M$ be a closed, connected, oriented 3-manifold. In this section we begin our construction of the desired knots in $M.$ We start by constructing a fibered 2-component link in $M$ using branched covers. We need the following definition.

\begin{definition}\label{simple}
A branched covering $p:M \longrightarrow N$ of degree $d$ is called \textbf{simple} if each point in $N$ has at least $d-1$ preimages in $M$. In particular, points away from the branch locus in $N$ will have $d$ preimages while points on the branch locus in $N$ will have $d-1$ preimages in $M.$ 
\end{definition}

Due to a theorem of Hilden and Montesinos, \cite{Hilden, Montesinos}, there exists a 3-fold simple branched covering $p:M\longrightarrow S^3$ branched along a knot $K.$ Since the branched covers constructed are simple, points on the branch locus $K\subseteq S^3$ have two preimages in $M.$ One preimage has branching index 1 while the other has branching index 2. By a theorem of Alexander \cite{Alexander} we can represent $K$ as the closure of a braid $\Pi\in\mathfrak{B}_k$ for some $k.$

\begin{figure}
    \centering
    \begin{overpic}[width = 6cm]{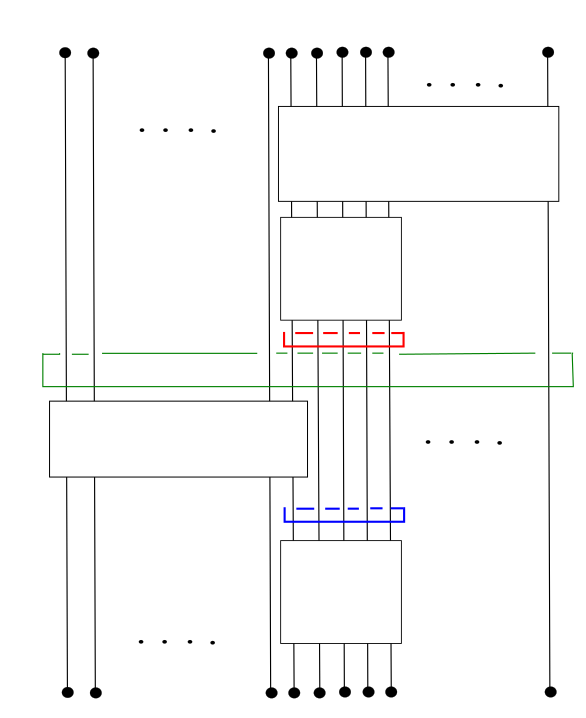}
        \put(8.25,95){1}
        \put(25,95){$2g+1-k$}
        \put(70, 95){$2g+1$}
        \put(55,74){\huge$\Pi$}
        \put(43,60){\LARGE $\Phi^n$}
        \put(57,53){\color{red}$\gamma_n$}
        \put(-1,48){\color{ao(english)} $\omega_n$}
        \put(21,35){\huge$\Sigma$}
        \put(57,27){\color{blue}$\delta_n$}
        \put(40,14){\LARGE $\Phi^{-n}$}
    \end{overpic}
    \caption{The braid whose closure will be the branching locus of our branched cover. The braid $\Pi$ is the braid word representing the knot $K$ which we branch over. The braid $\Phi$ is a pseudo-Anosov on its support. The word $\Sigma$ totally consists of stabilizations. Note that $\gamma_n$ and $\delta_n$ and the disks they bound do not change as $n$ increases.}
    \label{fig:Braid}
\end{figure}

Fix $g>1$ such that $2g+1 > k.$ Let $\sigma_i$ denote the positive half-twist between the $i^{\text{th}}$ and $(i+1)^{\text{th}}$ strands. We will consider $\Pi\in\mathfrak{B}_{k}$ under the natural inclusion into $\mathfrak{B}_{2g+1}$ where $\Pi$ is on the last $k$ strands (see figure \ref{fig:Braid}). In a variation on the construction of \cite{FPS} we now define the following braids.
\begin{align*}
    \Phi &= (\sigma_{2g+2-k})^{-1}(\sigma_{2g+3-k})(\sigma_{2g+4-k})^{-1}(\sigma_{2g+5-k}) \\
    \Sigma &= (\sigma_{2g+1-k})(\sigma_{2g-k})^{-1} \cdots (\sigma_{2})^{\pm}(\sigma_{1})^{\mp}\\
    \beta_n &= \Pi\Phi^n\Sigma\Phi^{-n}
\end{align*}

Let $\widehat{\beta_n}$ be the braid closure of $\beta_n.$ Let $\omega_n$ be its braid axis encircling the braid after the $\Phi^n$ factor and before the $\Sigma$ factor of $\beta_n.$ Let $\Lambda_n = \widehat{\beta_n} \cup \omega_n.$ Note that $\omega_n$ bounds a disk $\Omega_n$ in $S^3$ meeting $\widehat{\beta_n}$ in $2g+1$ points. Hence, $S^3 - \Lambda_n$ is a punctured disk bundle over the circle with monodromy $\beta_n.$

\begin{lemma}
The braid closure $\widehat{\beta_n}$ is the knot $K.$
\end{lemma}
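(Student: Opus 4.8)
The plan is to show that the extra braid factors $\Phi^n \Sigma \Phi^{-n}$ that we appended to $\Pi$ do not change the braid closure, so that $\widehat{\beta_n} = \widehat{\Pi} = K$. The key observation is that $\Phi^n \Sigma \Phi^{-n}$ is a product of a conjugate of $\Sigma$ and $\Sigma$ consists entirely of stabilizations, so closing up should undo all the added strands and crossings. More precisely, I would first recall the standard fact (see \cite{primer}, or the classical theory of Markov moves) that if $\beta \in \mathfrak{B}_m$ and $\beta' = \beta \cdot \sigma_m^{\pm 1} \in \mathfrak{B}_{m+1}$, then $\widehat{\beta} = \widehat{\beta'}$ as links in $S^3$; this is a Markov stabilization. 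The braid $\Sigma = (\sigma_{2g+1-k})(\sigma_{2g-k})^{-1}\cdots(\sigma_2)^{\pm}(\sigma_1)^{\mp}$ is exactly a sequence of such stabilization generators (written from the high-index strand down to strand $1$), so $\widehat{\Pi \Sigma} = \widehat{\Pi}$ by destabilizing the $2g+1-k$ strands one at a time from the bottom.

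The main point requiring care is the presence of the conjugating factors $\Phi^n$ and $\Phi^{-n}$. Here I would use the second Markov move: conjugate braids have the same closure, i.e. $\widehat{\alpha \gamma \alpha^{-1}} = \widehat{\gamma}$ for $\alpha, \gamma \in \mathfrak{B}_m$. However, $\beta_n = \Pi \Phi^n \Sigma \Phi^{-n}$ is not literally a conjugate of $\Pi\Sigma$ because $\Pi$ sits in front. The cleanest route is to note that $\widehat{\Pi \Phi^n \Sigma \Phi^{-n}} = \widehat{\Phi^{-n} \Pi \Phi^n \Sigma}$ by cyclic permutation of the braid word (closures are invariant under conjugation, and cyclic permutation is conjugation by the moved prefix). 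Now $\Phi$ is supported on strands $2g+2-k$ through $2g+5-k$ — that is, among the last $k$ strands where $\Pi$ also lives — wait, this needs checking: $\Phi = (\sigma_{2g+2-k})^{-1}(\sigma_{2g+3-k})(\sigma_{2g+4-k})^{-1}(\sigma_{2g+5-k})$ involves strands $2g+2-k,\dots,2g+6-k$, all within the last $k+1$ strands, so $\Phi$ and $\Pi$ do not obviously commute. Instead I would keep $\Phi^{-n}\Pi\Phi^n$ as an opaque braid word $\Pi'$ on the last $\max(k, \dots)$ strands and observe that $\Sigma$ still stabilizes \emph{away} from the support of $\Pi'$: since $\Sigma$ uses only generators $\sigma_1, \dots, \sigma_{2g+1-k}$ while $\Pi'$ is supported on strands indexed at least $2g+2-k$, the braid $\Pi' \Sigma$ destabilizes down to $\Pi'$ by peeling off strands $1$ through $2g+1-k$. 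Thus $\widehat{\beta_n} = \widehat{\Pi' \Sigma} = \widehat{\Pi'} = \widehat{\Phi^{-n}\Pi\Phi^n}$, and one more application of conjugation invariance of the closure gives $\widehat{\Phi^{-n}\Pi\Phi^n} = \widehat{\Pi} = K$.

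So the proof is essentially a bookkeeping argument combining the two Markov moves, and the one genuine thing to verify carefully is that the supports are disjoint in the right way: the generators appearing in $\Sigma$ (indices $1$ through $2g+1-k$) are disjoint from and can be Markov-destabilized past whatever braid word occupies strands $2g+2-k$ through $2g+1$, regardless of how the $\Phi^{\pm n}$ conjugation scrambles $\Pi$ within those top strands. I expect the main obstacle to be purely notational — keeping straight which strand indices are involved after conjugation, and making sure $\Phi^{\pm n}$ does not "leak" into the lower strands that $\Sigma$ acts on (which it cannot, since $\Phi$ uses only $\sigma_j$ with $j \ge 2g+2-k$). Once that is pinned down, iterated destabilization finishes it; I would present it as: cyclically permute to move $\Phi^{-n}$ to the front, observe $\Sigma$ is a stabilization sequence disjoint in support from $\Phi^{-n}\Pi\Phi^n$, destabilize, then undo the conjugation.
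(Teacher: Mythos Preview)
Your argument is correct and takes essentially the paper's approach: destabilize away the $\Sigma$ factor, then eliminate $\Phi^{\pm n}$. The paper is slightly more direct---it destabilizes $\Pi\Phi^n\Sigma\Phi^{-n}$ in place (each strand $j\le 2g+1-k$ meets only the single crossing $\sigma_j^{\pm 1}$, regardless of where that letter sits in the word), leaving $\Pi\Phi^n\Phi^{-n}$ with $\Phi^n$ and $\Phi^{-n}$ literally adjacent and cancelling as group elements---so your preliminary cyclic permutation and final conjugation-invariance step are unnecessary but harmless. One small slip: $\Sigma$ and $\Phi^{-n}\Pi\Phi^n$ are not strictly disjoint in support, since $\sigma_{2g+1-k}$ touches strand $2g+2-k$; but, as you also correctly state, what actually matters for iterated destabilization is that each generator $\sigma_j$ with $j\le 2g+1-k$ occurs exactly once.
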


\begin{proof}
Note that $\widehat{\Pi}$ is $K,$ and that $\widehat{\beta_n}$ is stabilized along the first $2g+1-k$ strands. Destabilizing smooths the crossings coming from $\Sigma$ and deletes the first $2g+1-k$ strands. Now the $\Phi^n$ and $\Phi^{-n}$ factors cancel. We are left with the $k$-strand braid closure $\widehat{\Pi}$ which by definition is $K.$
\end{proof}
We will now introduce some notation. Let $\gamma_n$ and $\delta_n$ be the simple closed curves encircling the $2g+2-k,\: 2g+3-k,\: 2g+4-k,\: 2g+5-k,\: \text{and}\: 2g+6-k$ strands of $\beta_n$ taken immediately before and after the $\Sigma$ factor of $\beta_n$ (see figure \ref{fig:Braid}). Let $\Gamma_n$ and $\Delta_n$ denote the disks bounded by $\gamma_n$ and $\delta_n$ respectively. Note that $\Gamma_n$ and $\Delta_n$ each intersect $\beta_n$ in 5 points. Let $w_n \: \text{and}\: W_n$ denote $p^{-1}(\omega_n)\: \text{and}\: p^{-1}(\Omega_n)$ respectively.

\begin{lemma}
   The manifold $N_n = M - w_n$ is an $S_{g-1,2}-$bundle over the circle.
\end{lemma}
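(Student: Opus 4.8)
The plan is to transport the fibration of $S^3 - \Lambda_n$ upstairs through the branched cover $p$ and then identify the genus and number of boundary components of the fiber. First I would recall that $S^3 - \Lambda_n$ fibers over the circle with fiber the punctured disk $\Omega_n \setminus \widehat{\beta_n}$ (a disk with $2g+1$ punctures) and monodromy $\beta_n$, as noted just before the lemma. Since $p\colon M \to S^3$ is a branched cover with branch locus $K = \widehat{\beta_n}$, and since $\omega_n = \partial \Omega_n$ is disjoint from $K$, the restriction $p\colon M - p^{-1}(K \cup \omega_n) \to S^3 - \Lambda_n$ is an honest (unbranched) covering map of degree $3$. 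Pulling back the fibration $S^3 - \Lambda_n \to S^1$ along this cover gives a fibration of $M - p^{-1}(K \cup \omega_n)$ over the circle; filling back in the preimage $p^{-1}(K)$ of the branch locus (which meets each fiber in the ramification points) promotes this to a fibration of $M - w_n$ over $S^1$, whose fiber is the surface $W_n \cap (\text{fiber})$, i.e.\ the component(s) of $p^{-1}(\Omega_n)$ over the disk fiber downstairs. Here I would use that $W_n = p^{-1}(\Omega_n)$ so the fiber of $N_n \to S^1$ is exactly $\widehat{W}_n$, a (branched) triple cover of the disk $\Omega_n$ branched over the $2g+1$ points $\Omega_n \cap \widehat{\beta_n}$.

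The core computation is then a Riemann–Hurwitz count. The cover is \emph{simple} of degree $3$, so over each of the $2g+1$ branch points in the disk there are exactly $2$ preimages, one with ramification index $2$ and one with ramification index $1$. Euler characteristic gives $\chi(\widehat{W}_n) = 3\chi(\Omega_n) - \sum (e_x - 1) = 3\cdot 1 - (2g+1)\cdot 1 = 2 - 2g$. To get the genus and boundary count I need the number of boundary components of $\widehat{W}_n$, i.e.\ the number of components of $w_n = p^{-1}(\omega_n)$: the braid axis $\omega_n$, read as a loop in $S^3 - K$, is the core of the complementary solid torus to the braid, and its image under the monodromy permutation of the $3$-fold cover determines how many circles sit above it. Here I would invoke the crucial feature emphasized in the outline — the degree $3$ is exactly what controls the number of preimages of the braid axis — and argue (presumably using that the simple cover's monodromy around $\omega_n$ is a transposition, being conjugate to a meridional monodromy) that $w_n$ has exactly $2$ components. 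With $b = 2$ boundary components, $\chi = 2 - 2(g-1) - 2 = 2-2g$ matches, forcing genus $g-1$. Hence the fiber is $S_{g-1,2}$, as claimed.

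I would organize the write-up as: (1) the unbranched covering and pullback of the fibration; (2) extending over $p^{-1}(K)$ to fiber $M - w_n$; (3) the Riemann–Hurwitz Euler characteristic count; (4) the count of components of $w_n$; (5) conclude genus $g-1$, two boundary components. The main obstacle I expect is step (4): pinning down that the braid axis lifts to exactly two circles rather than one or three. This needs a clear description of the monodromy of the Hilden–Montesinos cover restricted to a neighborhood of $\omega_n$ — specifically that going around $\omega_n$ corresponds to the full braid $\beta_n$ acting on the $3$ sheets in a way that is \emph{not} a $3$-cycle. If the earlier sections have already fixed the simple branched cover so that its sheet-monodromy sends each braid generator $\sigma_i$ to a transposition (the standard normalization for simple covers), then the permutation associated to $\omega_n$ is a product of transpositions, hence either trivial or a single transposition on the $3$ sheets; a short argument (e.g.\ that a globally trivial monodromy around the axis would force $M$ to be a disjoint-type cover, contradicting connectedness/simplicity, or a direct look at the linking number of $w_n$ components) rules out the trivial case and gives exactly $2$ components. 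Everything else is routine Euler-characteristic bookkeeping.
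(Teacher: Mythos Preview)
Your overall architecture matches the paper's: pull the fibration through $p$, compute $\chi(W_n)=2-2g$ by Riemann--Hurwitz using simplicity of the cover, and then determine the number of boundary components. The divergence is entirely in your step~(4), and there is a genuine error there.

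You assert that the permutation attached to $\omega_n$ is ``a product of transpositions, hence either trivial or a single transposition on the $3$ sheets.'' That implication is false in $S_3$: for instance $(1\,2)(2\,3)=(1\,2\,3)$. A product of transpositions in $S_3$ can perfectly well be a $3$-cycle, which would give $w_n$ a single component. Your argument as written does not exclude this. The easy repair is parity: the boundary loop of $\Omega_n$ is the product of the $2g+1$ meridional generators, each mapping to a transposition, so the total monodromy around $\omega_n$ is a product of an \emph{odd} number of transpositions and hence an odd permutation in $S_3$; the only odd permutations in $S_3$ are transpositions, and a transposition has cycle type $(2,1)$, giving exactly two components upstairs. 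Once you say ``odd,'' your step~(4) is complete and no further casework is needed.

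The paper sidesteps the sheet-monodromy discussion entirely with an equivalent but quicker parity argument on the Euler characteristic: $\partial W_n$ is a degree-$3$ cover of the circle $\partial\Omega_n$, so it has $1$, $2$, or $3$ components, and since $\chi(W_n)=2-2g$ is even while $\chi(S_{h,b})\equiv b\pmod 2$, one must have $b=2$. The paper then separately argues connectedness of $W_n$ (ruling out a split into a degree-$1$ plus degree-$2$ cover), a point your proposal does not explicitly address; you should add a line noting that the meridians of $K$ generate $\pi_1(S^3\setminus K)$, so transitivity of the monodromy on three sheets (i.e.\ connectedness of $M$) already forces the restricted cover $W_n\to\Omega_n$ to be connected.
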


\begin{proof}
That $N_n$ is fibered over the circle follows from the fact that $S^3 - \Lambda_n$ is a punctured disk bundle over the circle. Since $p$ is a simple, 3-fold cover and $\Omega_n$ intersects $\beta_n$ in $2g+1$ points, an Euler characteristic calculation shows that $W_n$ is homeomorphic to $S_{g-1,2}.$ Indeed, $p$ restricts to a simple 3-fold branched cover of the disk, $\Omega_n,$ branched over the $2g+1$ points of intersection with $\beta_n.$ By the definition of a 3-fold simple branched cover, each of the $2g+1$ branch points in $\Omega_n$ have two preimages. One of the preimages has branching index one while the other has branching index two. This is sufficient to compute the Euler characteristic of the fiber surface of $N_n.$ First, we apply the Riemann-Hurwitz formula:
\[\chi(W_n) = 3\cdot \chi(\Omega_n) - \sum_{p\in W_n} (e_p -1)\] where $e_p$ denotes the branching index at $p.$ As discussed above, we know the branching indices of all points in $W_n,$ so 
\[\chi(W_n) = 3\cdot 1 - (2g+1).\]
Hence, $\chi(W_n) = 2-2g.$ The boundary of $\Omega_n$ is triply covered by the boundary of $W_n.$ Therefore, $\partial W_n$ has 1, 2, or 3 components. By the above calculation $\chi(W_n)$ is even. Thus, $\partial W_n$ has 2 components. We also note here that by the definition a simple branched cover, $W_n$ is connected. Indeed, if $W_n$ were not connected, then it must be the disjoint union of two or three homeomorphic surfaces. By the above, $W_n$ has two boundary components, so $W_n$ must be the disjoint union of two surfaces, $W_1$ and $W_2$, each with one boundary component. However, the branched covering map, $p:W_n \rightarrow \Omega_n,$ restricts to a branched cover of $\Omega_n$ on each of $W_1$ and $W_2.$ In particular, we may assume without loss of generality that $p|_{W_1}$ is a one-sheeted cover and $p|_{W_2}$ is a two-sheeted cover. In this case, $p|_{W_1}$ is a homeomorphism, and thus $\Omega_n \cong W_1 \cong W_2.$ This contradicts the above Euler characteristic calculation for $W_n$ since $g>1.$ Combining the above, we see that $W_n$ is homeomorphic to, $S_{g-1,2},$ a surface with genus $g-1$ and 2 boundary components.

\end{proof}

\section{Hyperbolicity and Volume} \label{sec3}
To prove the main theorem we need to construct from $N_n$ the desired knot complements, verify their hyperbolicity, and show that their volumes grow linearly with $n$. In service of this we will change our perspective to a 2-dimensional perspective. We will focus on the fiber surface $W_n \subseteq N_n$ which is fixed as the monodromy of the fibration varies with $n.$ 

In particular we specify a marking of the disk $\Omega = \Omega_n \subseteq S^3$. Here, by a \emph{marking} we mean an isotopy class of a diffeomorphism from the disk $\Omega_n \subseteq S^3$ to a fixed disk with $2g+1$ marked points. We let $S^3 = \mathbb{R}^3 \cup {\infty}.$ Let $\Omega \subseteq \mathbb{R}^2 \times {0}$ be the unit disk in the plane with marked points $\{1, \dots, 2g+1\}$ where the marked point $i$ is at coordinate $(\frac{-1}{2} + \frac{i-1}{2g},0,0).$
Using this marking, we identify $\Gamma_n$ and $\Delta_n$ with subsurfaces of $\Omega_n$ by flowing them along the braid until they meet $\Omega_n.$  Moreover, by fixing a simple 3-fold branched covering of the disk, we fix a marking of the fiber surface, $W = W_n,$ as shown in figure \ref{fig:cover}, by composing the marking of the disk with the branched covering.  Again, by working in the marked surface we view the subsurfaces $p^{-1}(\Gamma_n)$ and $p^{-1}(\Delta_n)$ as subsurfaces of $W = p^{-1}(\Omega).$

From our 2-dimensional perspective we observe that $\gamma_n, \delta_n, \Gamma_n,\: \text{and} \: \Delta_n$ do not depend on $n,$ so from now on we will set $\gamma = \gamma_n,$ $\delta = \delta_n,$ $\Gamma = \Gamma_n,$ and $\Delta = \Delta_n.$ Let $b_n = P\circ F^n \circ S \circ F^{-n}$ be the monodromy of $N_n,$ where $P, \: F, \: \text{and} \: S$ are the lifts of $\Pi, \: \Phi, \text{and}\: \Sigma$ respectively.

Let $c,\: d,$ and $ w_n$ denote $p^{-1}(\gamma),\: p^{-1}(\delta),\: \text{and}\: p^{-1}(\omega_n)$ respectively. Also, let $C^+$ and $D^+$ denote $p^{-1}(\Gamma)$ and $p^{-1}(\Delta)$ respectively.

\begin{figure}
    \centering
    \begin{overpic}[width=8cm]{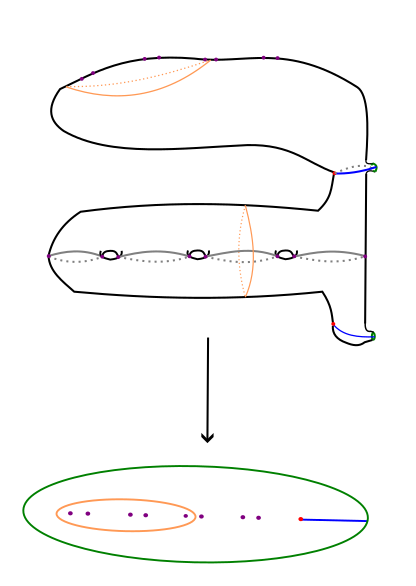}
    \put(25,70){\color{pale} \small $p^{-1}(\eta)$}
    \put(23,17){\color{pale} \small $\eta$}
    \put(43,69){\color{red} \small $p^{-1}(b)$}
    \put(45,45){\color{red} \small $p^{-1}(b)$}
    \put(48,10.5){\color{red} \small $b$}
    \put(55,13){\color{blue} \small $\alpha$}
    \put(59,18){\color{ao(english)} \small $\omega_n$}
    \put(66,72){\color{ao(english)} \small $w_n$}
    \put(65,45){\color{ao(english)} \small $w_n$}
    \put(63,67){\color{blue} \small $p^{-1}(\alpha)$}
    \put(53,38){\color{blue} \small $p^{-1}(\alpha)$}
        
    \end{overpic}
    \caption{The unique simple cover from $S_{3,2}$ to a disk. The gray curves divide the covering surface into three ``fundamental domains" for the cover. See the papers by Winarski and Fuller (\cite[section 3.2]{Winarski} and  \cite{Fuller}) for more on a similar simple branched cover.}
    \label{fig:cover}
\end{figure}

\begin{figure}
    \centering
    \begin{overpic}[width=11cm]{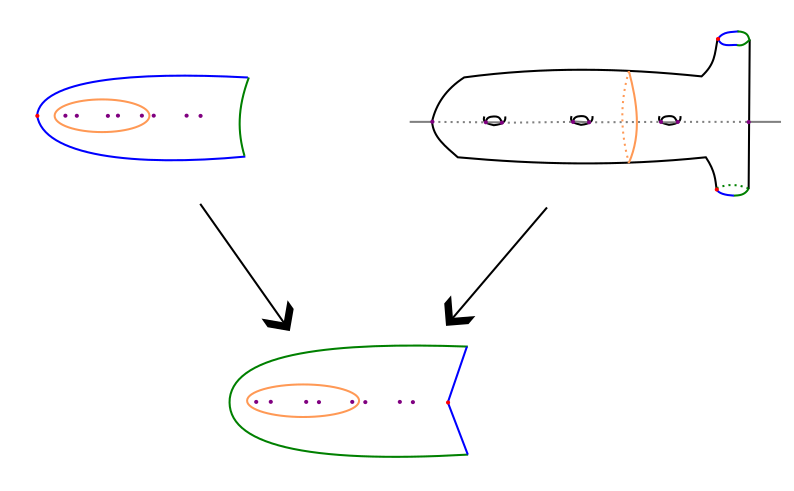}
    \put(9,55){\color{pale} \small $p^{-1}(\eta)$}
    \put(74,56){\color{pale} \small $p^{-1}(\eta)$}
    \put(38,15){\color{pale} \small $\eta$}
    \put(53,12){\color{red} \small $b$}
    \put(59,11){\color{blue} \small $\alpha$}
    \put(10,38){\color{blue} \small $p^{-1}(\alpha)$}
    \put(86,61){\color{blue} \small $p^{-1}(\alpha)$}
    \put(87,33){\color{blue} \small $p^{-1}(\alpha)$}
    \put(22,11){\color{ao(english)} \small $\omega_n$}
    \put(96,55){\color{ao(english)} \small $w_n$}
    \put(96,38){\color{ao(english)} \small $w_n$}
    
    \end{overpic}
    \caption{After cutting the disk $\Omega = \Omega_n$ along $\alpha$ and the covering surface along $p^{-1}(\alpha)$ we obtain two components of the covering surface. One component covers the cut disk trivially. The other component covers the cut disk by an order 2 involution that identifies the two boundary components.}
    \label{fig:cut_cover}
\end{figure}

\begin{lemma}\label{elev_surf}
$C^+$ and $D^+$ are each homeomorphic to the disjoint union of a disk and $S_{2,1}.$ Furthermore, $C^+ \cap D^+$ is the disjoint union of a $S_{1,2}$ and a disk.
\end{lemma}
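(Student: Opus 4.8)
The plan is to compute everything explicitly using the Riemann--Hurwitz formula together with a careful analysis of how the branch points of $\Gamma$ and $\Delta$ sit inside the disk $\Omega$. Since $\gamma$ (resp.\ $\delta$) encircles exactly $5$ of the $2g+1$ marked points of $\Omega$, the subdisk $\Gamma$ (resp.\ $\Delta$) is a disk meeting the braid in $5$ points, and $p^{-1}(\Gamma) = C^+$ is the simple $3$-fold branched cover of this subdisk branched over those $5$ points. The first step is to pin down the monodromy of the cover restricted to $\partial\Gamma$: because $C^+$ and $D^+$ are preimages of disks in $\Omega$, the total $2g+1$ branch points of $W \to \Omega$ have a prescribed pattern of transpositions (the standard pattern realizing the simple cover $S_{g-1,2}\to\Omega$ of figure \ref{fig:cover}), and restricting to the $5$ points inside $\Gamma$ determines the permutation of the sheets over $\partial\Gamma$. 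From the explicit model of figure \ref{fig:cover} one reads off that over the $5$ relevant points the transpositions are arranged so that one sheet is glued to the other two in a way that leaves $\partial\Gamma$ covered by $1$ circle over one sheet and $1$ circle over the double cover of a single boundary circle — i.e.\ $\partial C^+$ has two components, one trivial disk boundary and one boundary of a larger piece.

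Next I would do the Euler characteristic bookkeeping. For the component of $C^+$ covering $\Gamma$ trivially (the ``$1$-sheeted'' copy), that component is a disk. For the remaining part, $p$ restricts to a $2$-fold branched cover of $\Gamma$ branched over all $5$ points (in the simple cover each of the $5$ points has one preimage of branching index $2$ lying in this piece and one of index $1$ lying in the disk component, consistent with the global simple-cover condition), so Riemann--Hurwitz gives $\chi = 2\cdot 1 - 5 = -3$; a connected surface with $\chi = -3$ and $1$ boundary component (the double cover of $\partial\Gamma$, since a $2$-fold cover of a circle with no branching over it is connected iff the monodromy is the nontrivial element, which it is here) is $S_{2,1}$. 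The same computation verbatim applies to $D^+$. Connectedness of the $S_{2,1}$ piece is the point where I'd need the explicit branch pattern, exactly as in the proof of the previous lemma: a disjoint-union alternative would force a homeomorphism onto $\Gamma$ on one factor, contradicting $\chi = -3$.

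For the intersection $C^+\cap D^+$, the key observation is that $\Gamma$ and $\Delta$ differ by the braid word $\Sigma$ sitting between them, but $\Sigma$ only involves strands $1$ through $2g+1-k$, which are disjoint from the $5$ strands ($2g+2-k,\dots,2g+6-k$) that $\Gamma$ and $\Delta$ encircle; so after flowing to $\Omega$, the subdisks $\Gamma$ and $\Delta$ are isotopic disks around the same $5$ marked points and $\Gamma\cap\Delta$ is a subdisk containing those same $5$ points (one should check the curves $\gamma,\delta$ can be isotoped to bound disks whose intersection is an annular-complement disk around exactly the shared $5$ points — this is visible in figure \ref{fig:Braid}). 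Then $C^+\cap D^+ = p^{-1}(\Gamma\cap\Delta)$ is again a simple $3$-fold branched cover of a disk over $5$ points, but now I expect the relevant boundary monodromy — governed by which sheets get swapped over the $5$ points versus over the \emph{two} boundary circles $\gamma$ and $\delta$ — to split as a disk (the trivially-covered piece) together with the double cover of a pair-of-pants-type region giving $S_{1,2}$: Riemann--Hurwitz on the double-branched piece over an annulus-with-$5$-marked-points carrier gives $\chi = 2\cdot(-1) - \text{(branch contributions)}$, which I'd arrange to come out to $\chi(S_{1,2}) = -2$, with the two boundary components being the double cover of one of $\gamma,\delta$ plus a lift of the other.

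The main obstacle will be the bookkeeping of the sheet permutations: making rigorous exactly which of the two preimages of each of the five marked points lands in the disk component versus the genus component, and hence how the boundary circles $\gamma, \delta$ (and the auxiliary arc separating $\Gamma\cap\Delta$) lift. This is entirely determined by the explicit simple branched cover fixed in figures \ref{fig:cover} and \ref{fig:cut_cover}, so the honest argument is: cut $\Omega$ along the arc $\alpha$ as in figure \ref{fig:cut_cover} so that the cover becomes a trivial sheet plus an honest double cover identifying two boundary circles, restrict this picture to the subdisks $\Gamma$, $\Delta$, and $\Gamma\cap\Delta$, and then read off the three homeomorphism types and connectivity directly, with Riemann--Hurwitz only used as a consistency check on the Euler characteristics. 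I would present it in that order: (1) recall the cut model; (2) restrict to $\Gamma$ and identify $C^+$; (3) restrict to $\Delta$ and identify $D^+$ identically; (4) identify $\Gamma\cap\Delta$ as a disk around the same $5$ points and restrict to get $C^+\cap D^+$; (5) Euler characteristic checks and connectivity arguments in the style of the previous lemma.
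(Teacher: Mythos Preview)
Your treatment of $C^+$ and $D^+$ individually is essentially the paper's argument: invoke the explicit model of the simple cover (figures \ref{fig:cover}--\ref{fig:cut_cover}), observe that $\gamma$ and $\delta$ miss the arc $\alpha$, so one sheet covers $\Gamma$ trivially as a disk while the remaining two sheets give a genuine double branched cover over five points, and finish with Riemann--Hurwitz to get $S_{2,1}$. That part is fine.

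The intersection argument, however, contains a real error. You assert that ``$\Sigma$ only involves strands $1$ through $2g+1-k$, which are disjoint from the $5$ strands $(2g+2-k,\dots,2g+6-k)$,'' and conclude that $\Gamma\cap\Delta$ is a disk around the same five marked points. This is false: the first letter of $\Sigma$ is $\sigma_{2g+1-k}$, which moves strand $2g+2-k$ --- precisely the leftmost of the five strands that $\gamma$ encircles. Consequently, when $\delta$ is flowed back to $\Omega$ through $\Sigma$, the disk $\Delta$ does \emph{not} enclose the same five points as $\Gamma$; the two disks share only the four strands $2g+3-k,\dots,2g+6-k$, and $\Gamma\cap\Delta$ may be taken to be a disk meeting the braid in four points, not five.

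This matters for the computation. Over a disk with four index-two branch points the double-cover piece has $\chi = 2\cdot 1 - 4 = -2$, and the boundary monodromy (a product of four identical transpositions) is trivial, so the boundary lifts to two circles; that surface is $S_{1,2}$, as claimed. With your five-point disk you would instead get $\chi=-3$ and a single boundary circle, i.e.\ $S_{2,1}$, contradicting the lemma. Your attempt to salvage this by passing to ``an annulus-with-$5$-marked-points carrier'' has no basis: $\Gamma\cap\Delta$ is a disk, and the hedged Riemann--Hurwitz line $\chi = 2\cdot(-1) - \cdots$ does not correspond to any actual subsurface in the picture. Once you correct the strand count, the argument goes through exactly as the paper does it.
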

\begin{proof}
If we restrict $p$ to a single fiber of $N_n$ we get a simple 3-fold branched cover $p': S_{g-1,2}\longrightarrow D^2$ branched over $2g+1$ points.
It follows from the work of Gabai and Kazez in \cite{GK} that 3-fold simple branched covers of the disk are unique up to equivalence. By uniqueness of the cover we may assume it takes on the configuration shown in figure \ref{fig:cover} in the disk fiber of $S^3$ immediately following $\Pi.$ That is, the distinguished branch point labeled $b$ may be taken to lie on the $(2g+1)^{\text{st}}$ strand of the braid in the fiber $\Omega_n.$ In particular, every simple closed curve that bounds a disk containing any 5 of the branch points that also does not intersect the arc $\alpha$ can be taken by a homeomorphism of the disk to the simple closed curve $\eta$ as shown in figure \ref{fig:cover}. Observe that $c$ and $d$ will consist of two simple closed curves since $\gamma$ and $\delta$ each bound a disk with an odd number of branch points. By construction, $\gamma$ and $\delta$ do not intersect the arc $\alpha,$ and hence one of the two preimages of $\gamma$ and of $\delta$ is completely contained in the upper fundamental domain of the cover and bounds a disk as in the left panel of figure \ref{fig:cut_cover}. For example, every simple closed curve that bounds a disk containing five branch points other than the distinguished branch point, $b,$ in figure \ref{fig:cover} is homeomorphic to the curve $\eta$ by the change of coordinates principle. Again by an Euler characteristic calculation we conclude that the other component is homeomorphic to $S_{2,1},$ a surface of genus two with one boundary component. Note that $\Gamma \cap \Delta$ may be taken to be a disk that intersects $\beta_n$ in 4 points. Thus, $C^+ \cap D^+$ consists of 2 disconnected components. One is the intersection of two disks in the upper fundamental domain that must be connected by the properties of the cover and hence is a disk. The other component is the double branched cover of a disk, branched over 4 points all of which have branching index 2. By another Euler characteristic calculation this is a surface of genus 1 with 2 boundary components.
\end{proof}

Observe that $b_n$ factors as $(P\circ S)\circ (S^{-1} \circ F^n \circ S)\circ F^{-n}.$ Let $C$ and $D$ denote the $S_{2,1}$ component of $C^+$ and $D^+$ respectively. To improve readability, we will frequently omit the $\circ$ symbol. We however maintain the same convention that mapping classes are multiplied according to functional notation. That is, $PS$ refers to applying first $S$ then $P.$

\begin{lemma} \label{F_ppa}
    $F$ is a partial pseudo-Anosov with $\text{supp}(F) = D$ and $\text{supp}(S^{-1}FS) = C$.
\end{lemma}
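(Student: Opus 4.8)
The plan is to reduce the statement to a two-dimensional computation about the simple branched cover $p': S_{g-1,2} \to D^2$ and the specific braids $\Phi$ and $\Sigma$. Recall that $\Phi = (\sigma_{2g+2-k})^{-1}(\sigma_{2g+3-k})(\sigma_{2g+4-k})^{-1}(\sigma_{2g+5-k})$ is a braid supported on a sub-disk $\Delta \subseteq \Omega$ containing precisely the $5$ marked points labeled $2g+2-k, \dots, 2g+6-k$, i.e. $\Phi$ lies in the image of $\mathfrak{B}_4 = \Mod(D_4) \hookrightarrow \Mod(D_{2g+1})$ where the four ``moving'' punctures are among those five points and the curve $\delta = \partial \Delta$ is fixed. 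So the first step is to observe that $\Phi$, viewed in $\mathfrak{B}_4$, is (up to the standard identification) a well-known pseudo-Anosov braid: the alternating product of inverse half-twists on four strands. I would cite the classical fact (e.g. via the Burau representation, or by recognizing it as conjugate to a product realizing a hyperbolic element of the relevant mapping class group, as in \cite{FPS}) that this specific word is pseudo-Anosov on $D_4$, hence $\Phi$ is a partial pseudo-Anosov on $\Omega$ with support exactly $\Delta$ and acts as the identity outside $\Delta$.

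Next I would lift this through the branched cover. By Lemma \ref{elev_surf}, $p^{-1}(\Delta) = D^+$ is the disjoint union of a disk and a genus-two one-holed surface $D$, and $p$ restricts over $\Delta$ to a simple $3$-fold branched cover of the $5$-punctured disk. Since $F$ is by definition the lift of $\Phi$, and $\Phi$ is supported in $\Delta$ fixing $\delta$ and everything outside, $F$ is supported in $D^+$ and fixes $c = \partial(D^+)$ and acts as the identity on $W \setminus D^+$; on the disk component of $D^+$ it is the identity (a homeomorphism of a disk fixing the boundary that permutes branch points whose downstairs class is trivial lifts to the identity there), so $\text{supp}(F) \subseteq D$. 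To upgrade ``supported on $D$'' to ``pseudo-Anosov on $D$'' I would use that pseudo-Anosov-ness is detected by the stable/unstable foliations: the invariant measured foliations of $\Phi$ on $\Delta$ pull back under the branched covering $p|_D \colon D \to \Delta$ to measured foliations on $D$ that are invariant under $F$ with the same dilatation $\lambda>1$, and they remain filling and without closed leaves because $p|_D$ is a branched cover (a branched cover of a filling foliation is filling, as singularities only appear or change order over branch points). Hence $F|_D$ is pseudo-Anosov, so $F$ is a partial pseudo-Anosov with $\text{supp}(F) = D$.

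For the second assertion, note $S^{-1}FS$ is the lift of $\Sigma^{-1}\Phi\Sigma$, and $\Sigma = (\sigma_{2g+1-k})(\sigma_{2g-k})^{-1}\cdots(\sigma_2)^{\pm}(\sigma_1)^{\mp}$ is a chain of stabilization moves which carries the sub-disk $\Delta$ (around the five strands $2g+2-k,\dots,2g+6-k$) to the sub-disk $\Gamma$ (around the five strands $2g+1-k,\dots,2g+5-k$) — this is exactly the content of how $\gamma$ and $\delta$ were defined as ``the curves before and after the $\Sigma$ factor'' in Figure \ref{fig:Braid}. Concretely $\Sigma^{-1}(\delta) = \gamma$ and $\Sigma^{-1}$ conjugates the braid $\Phi$ supported on $\Delta$ to a braid supported on $\Gamma$ that is again the alternating four-strand word, hence pseudo-Anosov on $\Gamma$. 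Lifting through $p$ and applying the identical argument as for $F$ — using that $p^{-1}(\Gamma) = C^+ = (\text{disk}) \sqcup C$ with $C \cong S_{2,1}$ by Lemma \ref{elev_surf}, and that the pulled-back foliations are filling on $C$ — gives that $S^{-1}FS$ is a partial pseudo-Anosov with support exactly $C$.

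The main obstacle I expect is the transfer of the pseudo-Anosov property across the \emph{branched} cover: one must be careful that pulling back the invariant measured foliation of $\Phi$ does not create a one-sided leaf, a non-filling complementary region, or a $1$-pronged singularity at a non-puncture that would violate the definition of pseudo-Anosov on $D$ (resp. $C$). This is handled by tracking prong-counts via Euler--Poincaré / Riemann--Hurwitz along the three branch points of branching index $2$ that lie in $D$ (resp. $C$), exactly as in the Euler characteristic bookkeeping already used in Lemma \ref{elev_surf}; a convenient alternative is to invoke that a finite branched cover of a pseudo-Anosov map is pseudo-Anosov whenever the cover is ``fiber-preserving'' in the appropriate sense, which is satisfied here since $F$ is by construction the lift of $\Phi$. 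A secondary, more bookkeeping-level point is pinning down the identification of $\Phi$ (and of $\Sigma^{-1}\Phi\Sigma$) with the standard pseudo-Anosov four-braid and confirming $\text{supp}$ is exactly — not just contained in — $D$ (resp. $C$), which follows because the foliation has a genuine $k$-pronged singularity structure filling all of $D$ and hence cannot be isotoped off any essential subsurface.
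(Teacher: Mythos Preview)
Your approach is essentially the paper's: show $\Phi$ is pseudo-Anosov on its supporting sub-disk, lift the invariant measured foliations through the simple branched cover $p$ to the $S_{2,1}$ component, observe the disk component carries only the trivial mapping class, and check that no illegal $1$-prong singularities are created (the paper phrases this last step as ``any possible $1$-prong singularities at punctures of $\Omega_n$ are double covered in $C$ and $D$ and hence become $2$-prong singularities'', which is exactly your Euler--Poincar\'e/Riemann--Hurwitz bookkeeping). Your treatment of $S^{-1}FS$ via conjugation by $\Sigma$ is a bit more explicit than the paper's, but the content is the same.

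One bookkeeping correction: $\Phi$ is a \emph{five}-strand braid, not four. The word $(\sigma_{2g+2-k})^{-1}(\sigma_{2g+3-k})(\sigma_{2g+4-k})^{-1}(\sigma_{2g+5-k})$ uses four consecutive generators and hence involves strands $2g+2-k$ through $2g+6-k$; this is why $\Gamma$ and $\Delta$ each meet $\beta_n$ in five points and why the non-disk component of $C^+$ (resp.\ $D^+$) has genus $2$. So replace your $\mathfrak{B}_4 = \Mod(D_4)$ with $\mathfrak{B}_5 = \Mod(D_5)$ throughout. The alternating word $\sigma_1^{-1}\sigma_2\sigma_3^{-1}\sigma_4$ is still pseudo-Anosov on the $5$-punctured disk (this is what the paper asserts and what \cite{FPS} uses), so the rest of your argument goes through unchanged once the indexing is fixed.
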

\begin{proof}
    Recall that $p^{-1}(\text{supp}(\Phi))=p^{-1}(\Gamma) = C^+.$ By lemma \ref{elev_surf} $C^+$ is the disjoint union of $C$ and a disk. Moreover, the component of $C^+$ that contains branching index one points is the disk component again by lemma \ref{elev_surf} (see also figure \ref{fig:cut_cover}). We observe that $C^+ - C$ and $D^+ - D$ are both disks and hence any mapping class restricted to either one is trivial. Note that $p$ restricted to the subsurface $C$ that contains only index two points is a double branched cover of $\Gamma.$ Since $\Phi$ is a pseudo-Anosov on $\Gamma$ it preserves two transverse singular foliations on $\Gamma.$ The foliations lift through the branched cover to give singular transverse foliations on the surface $C.$ Any possible 1-prong singularities at punctures of $\Omega_n$ are double covered in $C$ and $D$ and hence become 2-prong singularites.   Therefore, $F$ is a partial pseudo-Anosov with $\text{supp}(F) = D$ and $\text{supp}(S^{-1}FS) = C.$ See \cite[section 14.1]{primer} for more on pseudo-Anosov mapping classes arising from branched covers.
   
\end{proof}

\begin{lemma} \label{t_dist}
$b_n$ has linearly growing translation distance in the curve complexes of $C$ and $D.$ In particular there exists some $n_0>0$ such that $b_n$ is not periodic for all $n\geq n_0.$
\end{lemma}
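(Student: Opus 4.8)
Proof proposal for Lemma~\ref{t_dist}.

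The plan is to extract the growth from the factorization $b_n=(PS)(S^{-1}F^nS)(F^{-n})=(PS)(S^{-1}FS)^{\,n}(F^{-n})$ together with Lemma~\ref{F_ppa}, which identifies $F$ as a partial pseudo-Anosov supported on $D$ and $S^{-1}FS$ as one supported on $C$. Two elementary facts do the work. First, since $F^{-n}$ preserves $D$ and acts there as the $n$-th power of a pseudo-Anosov, $d_{D}(\eta,F^{-n}\eta)\ge n\ell_D$ for \emph{every} curve $\eta$ with $\pi_D(\eta)\ne\varnothing$, where $\ell_D>0$ is the stable translation length of $F^{-1}|_D$ on $\mathcal C(D)$ (the bound holds for all $\eta$ by the standard lower bound on displacements by powers of a loxodromic isometry); symmetrically $d_C(\eta,(S^{-1}FS)^n\eta)\ge n\ell_C$ whenever $\pi_C(\eta)\ne\varnothing$. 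Second, a mapping class supported on a subsurface $Y$ changes $\pi_{Z}$ by a uniformly bounded amount whenever $Y\cap Z$ is a proper essential subsurface of $Z$: the two projections either share the curves produced by arcs lying in $Z\setminus Y$, or both lie within bounded distance of $\partial(Y\cap Z)$, and $\pi_Z$ of a multicurve always has uniformly bounded diameter. By Lemma~\ref{elev_surf}, $C\cap D$ is a proper essential subsurface of each of $C$ and $D$ with nontrivial complement, so the second fact applies to our two overlapping supports.

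Combining these: for any curve or pants decomposition $\eta$ with $\pi_C(\eta),\pi_D(\eta)\ne\varnothing$, the factor $F^{-n}$ displaces $\pi_D(\eta)$ by $\ge n\ell_D$; the factor $(S^{-1}FS)^n$, being supported on $C$ and meeting $D$ only inside $C\cap D\subsetneq D$, then alters $\pi_D$ by $O(1)$; and the constant prefix $PS$ shifts everything by $O(1)$ once the estimate is recorded on the image subsurface $(PS)D$. This gives $d_{D}(\eta,b_n\eta)\ge n\ell_D-O(1)$, and the bound in $C$ is symmetric, with $(S^{-1}FS)^n$ now supplying the loxodromic displacement and the inner $F^{-n}$, supported on $D$, perturbing $\pi_C$ by $O(1)$. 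Making the constants uniform, and passing from the two partial pseudo-Anosovs to suitable fixed powers so as to sit inside the quasi-isometrically embedded free group they generate, is exactly what the subsurface-projection estimates of Clay--Leininger--Mangahas \cite{CLM} organize, so I would cite those rather than rederive them. I expect the real obstacle to be precisely this interaction between the two overlapping supports --- one must rule out that $(S^{-1}FS)^n$ cancels the $\mathcal C(D)$-progress made by $F^{-n}$, and conversely --- which is why one cannot simply add projections as in the disjoint-support case; the constant prefix $PS$ is only a bookkeeping nuisance.

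For the final clause, suppose $b_n$ is periodic. Then $b_n$ fixes a point of Teichmüller space, hence is realized by an isometry of some hyperbolic metric $X$ on $W=S_{g-1,2}$. A Bers pants decomposition $P$ for $X$ has $X$-length bounded in terms of $g$ alone, and since $C$ is not a pair of pants every pants decomposition --- in particular $P$ and $b_nP$ --- contains a curve essentially meeting $C$, so $\pi_C(P),\pi_C(b_nP)\ne\varnothing$. As $b_n$ is an isometry of $X$, the pants decomposition $b_nP$ has the same $X$-length as $P$; two curves of bounded $X$-length have bounded geometric intersection, so Lemma~\ref{Hempel_lemma} bounds $d_W(P,b_nP)$, and the coarse-Lipschitz property of $\pi_C$ then bounds $d_C(P,b_nP)$, all in terms of $g$ only. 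This contradicts $d_C(P,b_nP)\ge n\ell_C-O(1)$ once $n$ is large, so there is some $n_0$ with $b_n$ not periodic for every $n\ge n_0$.
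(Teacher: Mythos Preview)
Your proposal is correct and lands on the same engine as the paper: both arguments reduce to the Clay--Leininger--Mangahas subsurface-projection estimates (the paper simply cites Theorem~5.2 of \cite{CLM} after checking via Lemmas~\ref{elev_surf} and \ref{F_ppa} that $S^{-1}F^nS$ and $F^{-n}$ are partial pseudo-Anosovs whose supports $C$ and $D$ overlap in an essential $S_{1,2}$). Your additional heuristic unpacking is helpful intuition, and your Bers-pants argument for non-periodicity is more explicit than the paper's one-line ``hence infinite order.''

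One caution about your ``second elementary fact.'' As stated---that a mapping class supported on $Y$ changes $\pi_Z$ by a \emph{uniformly} bounded amount whenever $Y\cap Z\subsetneq Z$---it is not correct: if $\eta$ is chosen so that $d_Z(\eta,\partial Y)$ is large (equivalently, by Behrstock, $d_Y(\eta,\partial Z)$ is small), then a high power of a pseudo-Anosov on $Y$ will carry $\pi_Z(\eta)$ to within bounded distance of $\pi_Z(\partial Y)$, producing $d_Z(\eta,g\eta)$ comparable to $d_Z(\eta,\partial Y)$, which is unbounded in $\eta$. What is true, and what suffices here, is that for a \emph{fixed} basepoint the change is bounded independently of $n$; this is exactly the bookkeeping that the Behrstock inequality and \cite{CLM} handle, and you are right to defer to them. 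Since you already flag this as ``the real obstacle'' and cite CLM rather than rely on the heuristic, the proof stands.
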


\begin{proof}
     By lemma \ref{F_ppa}, $S^{-1}F^nS$ and $F^{-n}$ are each pseudo-Anosov on their support, namely $C$ and $D$. By lemma \ref{elev_surf} $C \cap D$ is homeomorphic to a surface with genus 1 and 2 boundary components. Hence, Theoreom 5.2 of \cite{CLM} implies that $(S^{-1}F^nS) \cdot F^{-n}$ has linearly growing translation distance in the curve complexes of each of $C$ and $D$. Therefore, there exists some $n_0$ such that for all $n\geq n_0,\: b_n$ has large translation distance in each of $C$ and $D,$ and hence is infinite order.
\end{proof}

While we have shown that our monodromies are eventually not periodic, we need monodromies that are eventually pseudo-Anosov. Since, this may not be true for $b_n$, we need to modify our manifolds $N_n$ slightly. To that end we introduce a more 2-dimensional way of discussing fibered links in closed oriented 3-manifolds. We will use the equivalent notion of an \textbf{\textit{open book decomposition}} of our closed, oriented 3-manifold $M.$ An open book decomposition of $M$ is a pair $(S,\varphi)$ where $S$ is the fiber surface ($\partial S \neq \varnothing$) and $\varphi$ is the monodromy. The mapping torus $M_{\varphi}$ of $\varphi$ is homeomorphic to a link complement in $M.$ Then $M$ is homeomorphic to the quotient of $M_{\varphi}$ under the identification $(x,t) \sim (x,t')$ where $x\in \partial S$ and $t,t'\in [0,1].$ The quotient of $\partial M_{\varphi}$ under the above identification is the fibered link in $M$ which is called the \textbf{\textit{binding}} of $(S,\varphi).$ We now describe an operation on an open book decomposition $(S,\varphi)$ of $M$ called a stabilization.

\begin{definition}
    Let $(S,\varphi)$ be an open book decomposition of $M,$ and let $(\gamma,\partial \gamma)\subseteq (S,\partial S)$ be a properly embedded arc. A \textbf{stabilization} along $\gamma$ of the open book decomposition $(S,\varphi)$ is the open book decomposition $(S',\varphi')$ of $M.$ where $S'$ is obtained by attaching a 1-handle to $S$ that connects the endpoints of $\gamma$ on $\partial S$ and $\varphi' = T_{\gamma'} \circ \varphi$ where $\gamma'$ is the curve that agrees with $\gamma$ in $S$ and intersects the co-core of the 1-handle once (see figure \ref{fig:aos}) and $T_{\gamma'}$ denotes the Dehn twist along $\gamma'.$ 
\end{definition}

\begin{figure}[H]
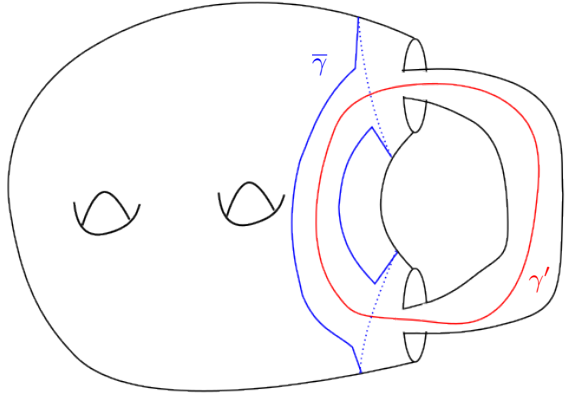

    \centering
    \begin{overpic}[width=8cm]{stabilization.PNG}
    \put(53,55){\color{blue} $\overline{\gamma}$}
    \put(89,19){\color{red} $\gamma'$}
        
    \end{overpic}
    \caption{The surface obtained by stabilizing $S_{2,2}.$ The curve $\gamma'$ is the curve we twist along while stabilizing. The curve $\overline{\gamma}$ is the result of the "closing up" of the arc $\gamma$ which agrees with $\gamma'$ on $S_{2,2}.$}
    \label{fig:aos}
\end{figure}

The following proposition is essentially theorem 1.1 of \cite{CH}. In the same way as described in subsection \ref{sec: curves}, we associate to any arc, $\gamma,$ a collection of mutually disjoint simple closed curves, $\overline{\gamma}.$

\begin{proposition} \label{stab_pA}
    Let $(S_{g-1,2}, \varphi)$ be an open book decomposition of a closed, oriented 3-manifold $M$ such that $\varphi$ is not periodic. Let $\gamma$ be an arc connecting the two boundary components of $S_{g-1,2}.$ If $d_{S_{g-1,2}}(\overline{\gamma},\varphi(\overline{\gamma})) = N > 16$ then stabilization along the arc $\gamma$ yields an open book decomposition $(S_{g,1},T_{\gamma'} \circ \varphi)$ such that $T_{\gamma'} \circ \varphi$ is pseudo-Anosov. Recall that $\gamma'$ is the extension of $\gamma$ to the stabilized surface.
\end{proposition}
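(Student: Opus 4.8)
The plan is to establish pseudo-Anosov-ness through the Nielsen--Thurston classification: writing $\psi := T_{\gamma'}\circ\varphi$, where I regard $\varphi$ as extended by the identity over the attached 1-handle, it suffices to rule out that $\psi$ is periodic and that $\psi$ is reducible. Two facts about the stabilization organize everything. Since $\gamma'$ coincides with $\gamma$ on $S_{g-1,2}$, the curve $\gamma'$ meets the subsurface $S_{g-1,2}\subseteq S_{g,1}$ in the single arc $\gamma$, so $\pi_{S_{g-1,2}}(\gamma')=\overline{\gamma}$; and $\overline{\gamma}$ becomes a separating curve of $S_{g,1}$ bounding a once-punctured torus that contains $\gamma'$, so $i(\overline{\gamma},\gamma')=0$ while $\psi$ wraps the co-core of the 1-handle once around $\gamma'$.

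First, $\psi$ is not periodic. Mapping class groups of compact surfaces with nonempty boundary (fixing the boundary pointwise) are torsion free, so a periodic $\psi$ would be the identity, forcing $\varphi=T_{\gamma'}^{-1}$; this is impossible because $\varphi$ is supported in $S_{g-1,2}$ and hence fixes the co-core of the 1-handle, whereas $T_{\gamma'}^{-1}$ does not, as $\gamma'$ meets the co-core once. (Alternatively one may cite \cite{CH}.) The non-periodicity of $\varphi$ itself is used in the reducible case.

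Next, $\psi$ is not reducible; this is where the hypothesis $N>16$ is used, and it is the heart of the argument --- essentially that of \cite[Theorem~1.1]{CH}. Suppose $\Sigma$ is a nonempty $\psi$-invariant multicurve on $S_{g,1}$. A reducible mapping class has bounded orbits in the curve graph, so it suffices to exhibit a curve whose $\psi$-orbit in $\mathcal{C}(S_{g,1})$ is unbounded. I would track the co-core arc $\alpha$ of the 1-handle: $\psi(\alpha)=T_{\gamma'}(\alpha)$ since $\varphi$ fixes $\alpha$, and then analyse the iterates $\psi^{n}(\alpha)$. Each application of $\psi$ wraps the previous arc once more around $\gamma'$ and then feeds it through $\varphi$; projecting to $S_{g-1,2}$ and using that $T_{\gamma'}$ fixes $\overline{\gamma}$, that $\pi_{S_{g-1,2}}$ is coarsely Lipschitz, and Hempel's bound (lemma \ref{Hempel_lemma}), one shows that the condition $d_{S_{g-1,2}}(\overline{\gamma},\varphi(\overline{\gamma}))>16$ forces these successive ``layers'' not to cancel, so that $d_{S_{g-1,2}}(\overline{\gamma},\pi_{S_{g-1,2}}(\psi^{n}(\alpha)))$ --- and hence $d_{\mathcal{C}(S_{g,1})}(\overline{\gamma},\psi^{n}(\alpha))$ --- grows without bound, contradicting boundedness of the orbit. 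The role of non-periodicity of $\varphi$ is to exclude the degenerate configuration in which a reducing curve of $\psi$ could be pushed entirely off the 1-handle and the layers collapse. Combining, $\psi$ is neither periodic nor reducible, hence pseudo-Anosov.

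The main obstacle is precisely this last estimate: controlling how the reducing multicurve, or equivalently the iterated co-core arc, interacts with the 1-handle and with $\gamma'$, and propagating the coarse-Lipschitz constants of subsurface projection together with Hempel's inequality so that the threshold lands exactly at $16$. Since the statement is essentially \cite[Theorem~1.1]{CH}, an equally legitimate route is simply to invoke that theorem after checking that $\gamma$ joins the two boundary components of $S_{g-1,2}$ and that the distance hypothesis there matches $d_{S_{g-1,2}}(\overline{\gamma},\varphi(\overline{\gamma}))>16$.
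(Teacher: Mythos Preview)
Your outline diverges from the paper's argument and leaves the decisive step unproved. For non-reducibility the paper (following \cite{CH}) does \emph{not} argue via unbounded orbits in $\mathcal{C}(S_{g,1})$; instead it takes an arbitrary multicurve $\delta\subset S_{g,1}$ and shows directly that $\psi(\delta)\neq\delta$ by computing intersection numbers with the co-core $a$ of the attached band. The argument splits into two cases. If $\delta\subset S_{g-1,2}$, one checks that $i(\psi(\delta),a)=i(\varphi(\delta),\gamma')$, and when this vanishes the distance hypothesis with $N>2$ already gives a contradiction. If $\delta$ crosses the band, one decomposes $\delta$ into arcs $\delta_i\subset S_{g-1,2}$ and linear arcs $\delta_i'\subset B$, sorts the latter by slope, and obtains $i(\psi(\delta),a)=k\pm m+n$; the only subtle subcase is $m=n$, and that is exactly where the constant $16$ is produced: a pigeonhole gives some $\delta_i$ with $i(\delta_i,\gamma)\le 2$, closing arcs to curves costs at most $4$ in intersection, and two applications of Hempel's inequality against $d(\overline{\gamma},\varphi(\overline{\gamma}))=N$ finish it. Your proposed route---show $d_{S_{g-1,2}}\bigl(\overline{\gamma},\pi_{S_{g-1,2}}(\psi^n(\alpha))\bigr)\to\infty$ and lift via the coarse Lipschitz property---is plausible in spirit, but you have not actually carried it out: ``the layers do not cancel'' is precisely the content to be proved, and you yourself flag it as the main obstacle. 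Note too that the co-core $a$ is an arc lying in the band, disjoint from the interior of $S_{g-1,2}$, so $\pi_{S_{g-1,2}}(a)$ is empty and the bookkeeping for $\psi^n(a)$ is more delicate than your sketch indicates; at a minimum you would need to pass to a curve before invoking bounded orbits.

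Your fallback of simply citing \cite[Theorem~1.1]{CH} also misses the point of this proposition: the paper's purpose is to make the threshold $16$ explicit, which \cite{CH} does not do, and \cite{CH} carries a right-veering hypothesis on $\varphi$ that the paper observes is unnecessary for the pseudo-Anosov conclusion. Your non-periodicity argument via torsion-freeness of $\Mod(S_{g,1})$ is fine and in fact cleaner than what the paper records (the paper, following \cite{CH}, instead shows $i(\psi^n(\eta),a)\to\infty$ for $\eta$ a component of $\partial S_{g-1,2}$).
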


\begin{proof}
    We note here that while theorem 1.1 of \cite{CH} contains the hypothesis that the monodromy $\varphi$ is right-veering this is not needed to show that $T_{\gamma'} \circ \varphi$ is pseudo-Anosov. All that is required is that $\varphi$ is not periodic. While the lemma is essentially theorem 1.1 of \cite{CH}, the required distance between $\overline{\gamma}$ and its image under $\varphi$ is not made explicit there. For the sake of completeness, we sketch their proof and make the distance explicit. Throughout this proof, $d(\cdot, \cdot)$ will refer to $d_{S_{g-1,2}}(\cdot,\cdot).$

    Let $\varphi' = T_{\gamma'} \circ \varphi.$ We argue that $\varphi'$ is pseudo-Anosov by showing that it is neither reducible nor periodic. We will first argue that $\varphi'(\delta) \neq \delta$ for any multicurve $\delta \subseteq S_{g,1}.$ First suppose that $\delta \subseteq S_{g-1,2} \subseteq S_{g,1}.$ If $i(\varphi(\delta), \gamma')=n$ then there is a representative, $g$ of $\varphi'(\delta)$ that intersects the co-core of the 1-handle, $a,$ $n$ times. If $i(\varphi'(\delta), a)<n$ then there is a bigon consisting of a subarc of $a$ and an arc of $g.$ Then one checks that this implies there is a bigon consisting of an arc of $\gamma$ and an arc of $\varphi(\delta).$ See the third paragraph of the proof of case 1 of theorem 1.1 of \cite{CH} for this argument. This is a contradiction if $n>0.$ If $n=0$ then $i(\varphi(\beta), \overline{\gamma}) = 0$ for any component $\beta$ of $\delta.$ Hence, $d(\varphi(\beta), \overline{\gamma}) = 1$ for all such $\beta.$ However, $d(\overline{\gamma}, \varphi(\overline{\gamma})) = N$ and therefore, $d(\varphi(\overline{\gamma}), \varphi(\beta)) \geq N-1$ for every $\beta.$ Since $n=0,$ we see that $i(\varphi'(\delta),\overline{\gamma})=0.$ However, $d(\overline{\gamma}, \beta) = d(\varphi(\overline{\gamma}), \varphi(\beta)) > N-1,$ and hence $i(\overline{\gamma}, \beta) > 2^{\frac{(N-1)-2}{2}}.$ This yields contradiction so long as $N>2.$

    Now suppose that $\delta \nsubseteq S_{g-1,2}.$ Let $i(\delta,a)=k>0.$ Let $B = S_{g,1} - S_{g-1,2}$ denote the stabilization band. Normalize $\delta$ so that it intersects $\gamma'$ and $a$ transversely and efficiently. We then subdivide $\delta$ into arcs $\delta_1, \dots, \delta_k,\delta_1',\dots, \delta_k'$ so that $\delta_i \subseteq S_{g-1,2}$ and $\delta_i' \subseteq B.$ The $\delta_i'$ are all linear arcs in $B.$ If $i(\delta_i',\gamma')=0$ we call such an arc \textit{vertical}. If $i(\delta_i',\gamma')>0$ we say it has \textit{positive slope} or \textit{negative slope}. See figure \ref{fig:slopes}. We normalize and subdivide $\varphi(\delta)$ in the same way.
    Up to isotopy we may assume that there is no triangle contained in $S_{g-1,2}$ with boundary an arc of $\gamma',$ an arc of $\delta,$ and an arc of $\partial S_{g-1,2}.$ Now let $m$ denote the number of arcs of $\varphi(\delta) \cap B$ that are not vertical. Let $n$ denote the number of intersections between $\varphi(\delta)$ and $\gamma'$ that are outside of $B$ so that $i(\varphi(\delta), \gamma')=m+n.$ Note that this definition of $n$ agrees with the definition of $n$ as $i(\varphi(\delta), \gamma')$ in the above paragraph when $\varphi(\delta)$ does not intersect the band $B.$ Colin and Honda show that $i(\varphi'(\delta), a) = k \pm m+n$ where the sign depends on whether the non-vertical arcs have positive or negative slope. If $m \neq n$  then $i(\varphi'(\delta), a) \neq i(\delta,a) = k,$ so $\varphi'(\delta) \neq \delta.$ It remains to deal with the case that $m=n.$

    Suppose that $m=n.$ Then $i(\varphi'(\delta),\gamma') = m+n = 2m \leq 2k$ by the definitions of $m$ and $k.$ Suppose for the sake of contradiction that $\varphi'(\delta) = \delta.$ Then $i(\delta, \gamma') = i(\varphi'(\delta), \gamma').$ Therefore $i(\delta,\gamma') \leq 2k,$ and hence there is some $1\leq i \leq k$ such that $i(\delta_i, \gamma) \leq 2k/k = 2.$ Now we note that 
    \[i(\overline{\delta_i}, \overline{\gamma}) \leq 2+4=6. \tag{$*$}\]
    By lemma \ref{Hempel_lemma}
    \[d(\overline{\delta_i},\overline{\gamma}) \leq \lfloor 2\log_2(6)+2 \rfloor = 7.\]
    Observe that $\overline{\delta_i}$ and $\overline{\delta_j}$ cannot jointly fill the surface, so
    \[d(\overline{\delta_i},\overline{\delta_j}) \leq 2 \: \text{for all } i,j\]
    Therefore,
    \[d(\overline{\delta_j},\overline{\gamma}) \leq 2+7=9 \: \text{for all } j.\]
    Applying $\varphi$ yields
    \[d(\varphi(\overline{\delta_j}),\varphi(\overline{\gamma}))\leq 9 \: \text{for all } j.\]
    Thus,
    \[d(\varphi(\overline{\delta_j}),\overline{\gamma})\geq N - 9.\]
    Applying Hempel's lemma again,
    \[i(\varphi(\overline{\delta_j}),\overline{\gamma}) \geq 2^{(N-9)/2 - 1}.\]
   Passing from arcs to the corresponding closed curves we may increase intersection number by at most 4 (as noted in, ($*$), for example). Hence, when passing from closed curves back to arcs we may decrease intersection number by at most 4.  Therefore,
    \[i(\varphi(\delta_j),\gamma) \geq 2^{(N-9)/2 - 1} - 4.\]
    Multiplying by $k,$ we get:
    \[i(\varphi(\delta),\gamma') \geq (2^{(N-9)/2 - 1} - 4)k > 2k\]
    this is a contradiction so long as $N>16$ which proves that $\varphi'$ is not reducible.
    
    It just remains to show that $\varphi'$ is not periodic. Colin and Honda show that $\varphi'$ is not periodic by considering one of the connected components, $\eta,$ of $\partial S_{g-1,2}$ as a curve in $S_{g,1}.$ They argue that $i((\varphi')^n(\eta), a) \rightarrow \infty$ as $n \rightarrow \infty.$
\end{proof}

\begin{figure}[H]
    \centering
    \begin{overpic}[width=8cm]{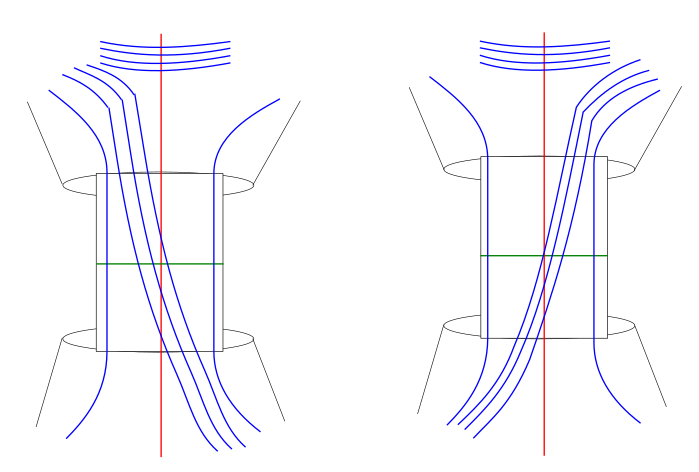}
        \put(10,60){\color{blue} \small $\delta$}
        \put(63,60){\color{blue} \small $\delta$}
        \put(18,5){\color{red} \small $\gamma'$}
        \put(80,5){\color{red} \small $\gamma'$}
        \put(34,30){\color{ao(english)} \small $a$}
        \put(89,30){\color{ao(english)} \small $a$}
    \end{overpic}
    \caption{On the left is the case that $\delta$ has negative slope, and on the right is the case that $\delta$ has positive slope. In both cases $k=5,$ $m=3,$ and $n=4.$}
    \label{fig:slopes}
\end{figure}

We now apply the above lemma to $M - w_n$ to obtain the following:
\begin{proposition} \label{Mhyp}
    Let $(S_{g-1,2},b_n)$ be the open book decomposition of $M$ corresponding to the fibered link $w_n\subseteq M,$ and let $n_0$ be the constant from lemma \ref{t_dist}. There exists an arc of stabilization $\gamma$ and $n_1\geq n_0 > 0$ such that $T_{\gamma'} \circ b_n$ is pseudo-Anosov for all $n\geq n_1,$ and hence $M - \overline{w_n}$ is hyperbolic, where $\overline{w_n}$ is the image of $w_n$ after stabilization.
\end{proposition}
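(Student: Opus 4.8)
The plan is to reduce the statement to proposition~\ref{stab_pA}. Since lemma~\ref{t_dist} already gives that $b_n$ is not periodic for $n\geq n_0$, by proposition~\ref{stab_pA} it suffices to exhibit one properly embedded arc $\gamma$ connecting the two boundary components of $S_{g-1,2}$, together with an integer $n_1\geq n_0$, for which
\[
  d_{S_{g-1,2}}\bigl(\overline{\gamma},\, b_n(\overline{\gamma})\bigr) > 16
  \qquad \text{whenever } n \geq n_1 ;
\]
proposition~\ref{stab_pA} will then make $T_{\gamma'}\circ b_n$ pseudo-Anosov on $S_{g,1}$ for all such $n$. Note that the surface $S_{g-1,2}$ and the subsurfaces $C,D$ of lemma~\ref{elev_surf} are independent of $n$, so one fixed arc handles the whole family, which is exactly what is needed to produce infinitely many knots of the same genus $g$.

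First I would choose $\gamma$. By lemma~\ref{elev_surf}, $C\subseteq S_{g-1,2}$ is a copy of $S_{2,1}$ and $C\cap D$ is (up to a disk) a copy of $S_{1,2}$, so $C$ contains arcs essential in $C$ that can be isotoped off $D$. For $g$ large enough --- this is one of the conditions defining $g_0$ --- there is room to route $\gamma$ from one boundary component of $S_{g-1,2}$ so that $\gamma\cap C$ is a single such arc, and out to the other boundary component, keeping $\gamma$ disjoint from $D$. The resulting curve $\overline{\gamma}$ is then automatically separating and cuts off genus $g-1$; moreover $\overline{\gamma}$ is disjoint from $D$ and meets $C$ in a nonempty union of arcs essential in $C$, so $\pi_C(\overline{\gamma})$ is a well-defined vertex of $\mathcal{C}(C)$.

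Next I would extract the distance bound. Write $b_n = (P\circ S)\circ g_n$ with $g_n = (S^{-1}\circ F^n\circ S)\circ F^{-n}$, as in the paragraph preceding lemma~\ref{F_ppa}. Since $\overline{\gamma}$ is disjoint from $D = \text{supp}(F)$ (lemma~\ref{F_ppa}) we have $F^{-n}(\overline{\gamma}) = \overline{\gamma}$, hence $g_n(\overline{\gamma}) = (S^{-1}FS)^n(\overline{\gamma})$; and since $S^{-1}FS$ is pseudo-Anosov on $C$ (lemma~\ref{F_ppa}) it acts loxodromically on $\mathcal{C}(C)$, so $d_C\bigl(\overline{\gamma},\, g_n(\overline{\gamma})\bigr) = d_{\mathcal{C}(C)}\bigl(\pi_C(\overline{\gamma}),\, (S^{-1}FS)^n\pi_C(\overline{\gamma})\bigr)$ grows linearly in $n$ (this is also the content of lemma~\ref{t_dist}). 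Because $P\circ S$ is a fixed isometry of $\mathcal{C}(S_{g-1,2})$, applying it gives $d_{S_{g-1,2}}(\overline{\gamma}, b_n(\overline{\gamma})) = d_{S_{g-1,2}}\bigl((P\circ S)^{-1}(\overline{\gamma}),\, g_n(\overline{\gamma})\bigr)$, so the coarse Lipschitz bound for $\pi_C$ from section~\ref{sec: curves} together with the triangle inequality in $\mathcal{C}(C)$ yields
\[
  d_{S_{g-1,2}}\bigl(\overline{\gamma},\, b_n(\overline{\gamma})\bigr)
  \;\geq\;
  \tfrac12\Bigl( d_C\bigl(\overline{\gamma},\, g_n(\overline{\gamma})\bigr)
    - d_C\bigl(\overline{\gamma},\, (P\circ S)^{-1}(\overline{\gamma})\bigr) - 2 \Bigr),
\]
where the middle term is a constant independent of $n$; if $(P\circ S)^{-1}(\overline{\gamma})$ happens not to meet $C$ one instead runs a short bounded geodesic image argument (theorem~\ref{bgit}) with a fixed reference curve inside $C$. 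Either way the left-hand side tends to infinity, so some $n_1\geq n_0$ makes it exceed $16$, and proposition~\ref{stab_pA} applies.

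Finally, for $n\geq n_1$ the pair $(S_{g,1}, T_{\gamma'}\circ b_n)$ is an open book decomposition of $M$ with pseudo-Anosov monodromy and binding $\overline{w_n}$, since stabilization does not change the ambient manifold. Thus $M-\overline{w_n}$ is homeomorphic to the mapping torus of the pseudo-Anosov homeomorphism $T_{\gamma'}\circ b_n$ of $S_{g,1}$, and by Thurston's hyperbolization theorem for surface bundles over the circle it is hyperbolic. I expect the genuine work to be concentrated in the choice of $\gamma$ and the distance estimate: one must verify that the arc forced on us by the stabilization can be chosen so that every relevant subsurface projection is defined and so that the fixed factor $P\circ S$ is genuinely absorbed into an $n$-independent error --- which is why $\overline{\gamma}$ is taken disjoint from $D$, so that $F^{-n}$ acts trivially on it and only the honestly pseudo-Anosov factor $S^{-1}F^nS$ survives, and why $g$ must be taken large. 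The passage from ``not periodic plus large displacement'' to ``pseudo-Anosov'' is handled entirely by proposition~\ref{stab_pA}, and the concluding hyperbolicity is standard.
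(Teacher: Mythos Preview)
Your approach is correct in outline and reaches the same conclusion, but the paper chooses the stabilization arc $\gamma$ by a different mechanism. Rather than engineering $\gamma$ to be disjoint from $D$, the paper starts with an arbitrary arc $\gamma_0$ connecting the two boundary components and iterates an auxiliary pseudo-Anosov $f$ (chosen not to share a stable lamination with $b_{n_0}$) until $\overline{\gamma}=f^k(\overline{\gamma_0})$ satisfies both $d_{S_{g-1,2}}(\overline{\gamma}, b_{n_0}(\overline{\gamma}))>16$ and $d_{S_{g-1,2}}(\overline{\gamma}, \partial C)>18$; a short lamination-convergence argument (if the displacement stayed bounded, $b_{n_0}$ would fix the stable lamination of $f$) shows such $k$ exists. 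For the uniformity in $n$ the paper then runs essentially the same endgame you sketch as your ``fallback'': since $d_C(\overline{\gamma}, b_n(\overline{\gamma}))$ grows with $n$ (the fixed factor $PS$ being absorbed into constants), the bounded geodesic image theorem forces any $\mathcal{C}(S_{g-1,2})$-geodesic from $\overline{\gamma}$ to $b_n(\overline{\gamma})$ to pass within distance $1$ of $\partial C$, giving $d_{S_{g-1,2}}(\overline{\gamma}, b_n(\overline{\gamma})) \geq d_{S_{g-1,2}}(\overline{\gamma}, \partial C)-1>17$ for all large $n$.

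Your route is more direct---no lamination argument, no auxiliary pseudo-Anosov---and exploits the factorization more fully by killing the $F^{-n}$ factor outright once $\overline{\gamma}$ misses $D$. Its cost is the topological claim you leave unverified: that an arc connecting the two boundary components of $S_{g-1,2}$, disjoint from $D$, and meeting $C$ in an arc essential in $C$ actually exists. You flag this as a condition on $g$, but in the branched-cover picture $C\setminus D$ is a small piece (Euler characteristic $-1$, met by $\partial C$ in a single circle) and the check is not automatic. The paper's dynamical choice of $\gamma$ trades that concreteness for an argument insensitive to how $C$ and $D$ sit inside $S_{g-1,2}$.
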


\begin{proof}
      First, note that by lemma \ref{t_dist}, for all $n\geq n_0$ the mapping class $b_{n}$ is not periodic. We now explain how to choose an arc of stabilization $\gamma$ such that $d_{S_{g-1,2}}(\overline{\gamma},b_{n_0}(\overline{\gamma})) > 16$ following \cite{CH}. First let $\gamma_0$ be any properly embedded arc connecting the two boundary components of $S_{g-1,2}.$ In the case that $b_{n_0}$ is pseudo-Anosov, fix a pseudo-Anosov, $f\in \Mod(S_{g-1,2}),$ that does not share a stable or unstable lamination with $b_{n_0}.$ In the case that $b_{n_0}$ is reducible, any pseudo-Anosov, $f,$ will suffice. Let $\nu$ be the stable lamination of $f.$ Note that this implies that $b_{n_0}(\nu) \neq \nu.$ The sequence $f^i(\overline{\gamma_0}) \rightarrow \nu$ as $i\rightarrow \infty.$ We claim that $d_{S_{g-1,2}}(f^i(\overline{\gamma_0}), b_{n_0}(f^i(\overline{\gamma_0}))) \rightarrow \infty.$  If $d_{S_{g-1,2}}(f^i(\overline{\gamma_0}), b_{n_0}(f^i(\overline{\gamma_0})))$ does not approach $\infty,$ then up to passing to a subsequence $d_{S_{g-1,2}}(f^i(\overline{\gamma_0}), b_{n_0}(f^i(\overline{\gamma_0}))) = N$ for some constant $N.$ Let $f^i(\overline{\gamma_0}) = v_{i,0}, v_{i,1}, \dots , v_{i,N-1}, v_{i,N} = b_{n_0}(f^i(\overline{\gamma_0}))$ be a geodesic in the curve graph. Note that $v_{i,0}$ and $v_{i,1}$ are disjoint. Up to passing to a subsequence, $v_{i,1}$ converges to a lamination $\nu'$ such that $i(\nu,\nu')=0.$ Similarly, $v_{i,N-1}$ converges to a lamination $\lambda$ such that $i(\lambda, b_{n_0}(\nu))=0.$ Since $\nu$ is minimal and filling and $i(\nu, \nu') = 0,$ we have that $\nu = \nu'$ and similarly $b_{n_0}(\nu) = \lambda.$ Hence, we have $v_{i,1} \rightarrow \nu$ and $v_{i,N-1} \rightarrow b_{n_0}(\nu)$ as $i \rightarrow \infty,$ but $d_{S_{g-1,2}}(v_{i,1}, v_{i,N-1})<N.$ After repeating this process finitely many times, we conclude that $b_{n_0}(\nu) = \nu.$ This contradicts the choice of $f.$ Therefore, there exists some $m > 0$ such that  $d_{S_{g-1,2}}(f^m(\overline{\gamma_0}), b_{n_0}(f^m(\overline{\gamma_0}))) > 16.$ Moreover, $f$ is pseudo-Anosov and hence the action of $f$ on $\mathcal{C}(S_{g-1,2})$ has positive translation distance. In particular, the forward translates of a vertex in $\mathcal{C}(S_{g-1,2})$ are eventually arbitrarily far from any fixed vertex of $\mathcal{C}(S_{g-1,2}).$ Therefore, there exists some $l_0>0$ so that for all $l\geq l_0$ $d_{S_{g-1,2}}(f^l(\overline{\gamma_0}), \partial C) > 18.$ Let $k = \max \{ m,l \}.$ Let $\gamma$ be the arc such that $f^k(\overline{\gamma_0}) = \overline{\gamma}.$
      
      Note that although the above only shows how to choose an arc of stablization for $b_{n_0}$ we may choose such an arc of stabilization uniformly for all open book decompositions $(S_{g-1,2}, b_n)$ where $n$ is large enough. To see why this is true consider the subsurface $C.$ Observe that there exist constants $K,L>0$ such that $K^{-1}d_{C}(\overline{\gamma},S^{-1}F^{n}SF^{-n}(\overline{\gamma}))-L \leq d_{C}(\overline{\gamma},b_{n}(\overline{\gamma}))\leq Kd_{C}(\overline{\gamma},S^{-1}F^{n}SF^{-n}(\overline{\gamma}))+L$ for all $n.$ This is due to the factorization of $b_n$ as $(PS)(S^{-1}F^nSF^{-n)}$ where the $(PS)$ factor is fixed as $n \rightarrow \infty.$ By lemma \ref{t_dist} there exists $n_1 \geq n_0 >0$ such that $d_{C}(\overline{\gamma},S^{-1}F^{n}SF^{-n}(\overline{\gamma})) \gg 0$ for all $n\geq n_1.$ Hence, by theorem \ref{bgit} (see also remark \ref{bgit_rmk}) the geodesic in $\mathcal{C}(S_{g-1,2}),$ between $\overline{\gamma}$ and $b_{n}(\overline{\gamma})$ must pass within distance 1 of $\partial C$ for all $n\geq n_1.$ Therefore we have the following inequality.
    \[d_{S_{g-1,2}}(\overline{\gamma}, \partial C) - 1 \leq d_{S_{g-1,2}}(\overline{\gamma}, b_{n}({\overline{\gamma})})\]
    
    Now note that, $d_{S_{g-1,2}}(\overline{\gamma}, \partial C)$ is unchanged as $n\rightarrow \infty,$ and hence \\ $d_{S_{g-1,2}}(\overline{\gamma}, b_{n}({\overline{\gamma})})> 16$ for all $n\geq n_1.$ Thus, by proposition \ref{stab_pA} the family of open book decompositions $(S_{g,1}, T_{\gamma'} \circ b_n)$ have pseudo-Anosov monodromies for all $n\geq n_1.$    
\end{proof}

\begin{remark} \label{ppa}
    Note that stabilization does not affect the conclusions of lemmas \ref{F_ppa} and \ref{t_dist}. We consider the subsurfaces $C$ and $D$ included into the stabilized surface and the mapping class $F$ as a mapping class on the stabilized surface. Then, $T_{\gamma'} \circ b_n$ has linearly growing translation distance in the curve graphs of $C$ and $D,$ and $F$ is a partial pseudo-Anosov on $S_{g,1},$ the stabilized surface.
\end{remark}

\begin{proposition}
    In fixed genus $g,$ as $n$ tends to infinity, the knot complements $M_n = M - \overline{w_n}$ are eventually hyperbolic, with volumes tending to infinity.
\end{proposition}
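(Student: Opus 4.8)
The plan is to combine the pseudo-Anosov conclusion of Proposition \ref{Mhyp} with Brock's volume-vs-translation-distance theorem \ref{brock}, using the Masur--Minsky distance formula \ref{MM_dist} and the linearly growing subsurface projections from Lemma \ref{t_dist} (as amended by Remark \ref{ppa}) to produce the linear lower bound on volume. First I would invoke Proposition \ref{Mhyp} to fix the arc of stabilization $\gamma$ and the threshold $n_1$, so that for all $n \geq n_1$ the monodromy $\varphi_n := T_{\gamma'} \circ b_n$ is pseudo-Anosov on $S_{g,1}$; by Thurston's hyperbolization for fibered 3-manifolds this already gives that $M_n = M - \overline{w_n}$ is hyperbolic for $n \geq n_1$. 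It remains to show $\mathrm{vol}(M_n) \to \infty$, and in fact linearly in $n$.

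For the volume growth I would argue as follows. Fix a pants decomposition $P$ of $S_{g,1}$ and consider $d_{\mathbf{P}}(P, \varphi_n(P))$; since $\tau_{\mathbf{P}}(\varphi_n) \geq d_{\mathbf{P}}(P,\varphi_n(P)) - 2 d_{\mathbf{P}}(\mathrm{diam})$ up to additive error, it suffices to bound $d_{\mathbf{P}}(P,\varphi_n(P))$ below linearly (more precisely, one estimates $\tau_{\mathbf{P}}$ directly, or uses that the pants distance between $P$ and its image coarsely computes $\tau_{\mathbf{P}}$ along the axis). By Theorem \ref{MM_dist}, $d_{\mathbf{P}}(P, \varphi_n(P))$ is coarsely bounded below by $\sum_Y d_Y(P, \varphi_n(P))$ over subsurfaces $Y$ with $d_Y(P,\varphi_n(P)) \geq M$. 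Now Remark \ref{ppa} (building on Lemma \ref{t_dist} and Theorem 5.2 of \cite{CLM}) tells us that $d_C(P, \varphi_n(P))$ and $d_D(P, \varphi_n(P))$ both grow linearly in $n$: the factorization $\varphi_n = (PS\,T_{\gamma'})(S^{-1}F^nS)(F^{-n})$ has its first factor fixed while $F^{\pm n}$ are pseudo-Anosov on the overlapping subsurfaces $C$ and $D$ with $C \cap D \cong S_{1,2}$, so \cite{CLM} applies. Taking $n$ large enough that $d_C, d_D \geq M$, the right-hand sum in \ref{MM_dist} is at least $d_C(P,\varphi_n(P)) + d_D(P,\varphi_n(P))$, which is $\geq c\cdot n$ for some $c = c(g) > 0$ and all $n \geq n_2$ for suitable $n_2 \geq n_1$. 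Hence $\tau_{\mathbf{P}}(\varphi_n) \geq c' n - c''$, and Theorem \ref{brock} gives $\mathrm{vol}(M_n) \geq K^{-1}\tau_{\mathbf{P}}(\varphi_n) \to \infty$ linearly.

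One technical point I would be careful about is that Lemma \ref{t_dist} as stated gives growth of translation distance $\tau_{\mathcal C}$ in the curve complexes of $C$ and $D$, whereas the distance formula wants the projection distances $d_C(P,\varphi_n(P))$, $d_D(P,\varphi_n(P))$ for a fixed pants decomposition $P$. These are reconciled exactly as in the "uniform arc of stabilization" argument inside the proof of Proposition \ref{Mhyp}: because the $(PS\,T_{\gamma'})$ factor is fixed, $d_C(P,\varphi_n(P))$ differs from $d_C(P, S^{-1}F^nSF^{-n}(P))$ by bounded multiplicative and additive error, and the latter grows linearly by \cite{CLM} (applied to the curve $\pi_C(P)$ rather than $\overline{\gamma}$, or by using $\tau_{\mathcal C}$ growth together with coarse Lipschitz-ness of projection). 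I expect this bookkeeping — translating the $\tau_{\mathcal C}$ statement of Lemma \ref{t_dist} into a clean linear lower bound on $\sum_Y d_Y(P,\varphi_n(P))$, and then on $\tau_{\mathbf P}(\varphi_n)$ — to be the main obstacle; once it is in place, Theorem \ref{brock} finishes the argument immediately, and combined with the earlier lemmas identifying the fiber genus (Lemma on $W_n \cong S_{g-1,2}$ and the stabilization raising it to $S_{g,1}$) this will complete the proof of Theorem \ref{mainthm}.
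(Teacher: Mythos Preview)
Your proposal is correct and follows essentially the same route as the paper's own proof: hyperbolicity from Proposition~\ref{Mhyp}, linear growth of the subsurface projections to $C$ and $D$ via Lemma~\ref{t_dist} and Remark~\ref{ppa} (invoking \cite{CLM}), the Masur--Minsky distance formula (Theorem~\ref{MM_dist}) to upgrade this to linear growth of $\tau_{\mathbf P}$, and then Brock's Theorem~\ref{brock} for the volume. The only slip is the ordering in your factorization---since $\varphi_n = T_{\gamma'}\circ b_n$, the fixed prefix is $T_{\gamma'}\,PS$ rather than $PS\,T_{\gamma'}$---but this does not affect the argument, and the bookkeeping worry you flag (passing from $\tau_{\mathcal C}$ to $d_Y(P,\varphi_n(P))$) is handled by the observation that translation distance in $\mathcal C(Y)$ lower-bounds $d_Y(\alpha,\varphi_n(\alpha))$ for any fixed $\alpha$, up to the bounded diameter of $\pi_Y(\alpha)$.
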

\begin{proof}
     As is shown in proposition \ref{Mhyp} $b_n' = T_{\gamma'} \circ b_n$ are pseudo-Anosov for $n\geq n_1.$ Hence, their mapping tori, $M_n,$ are hyperbolic for $n\geq n_1.$ As is noted in remark \ref{ppa} and lemma \ref{t_dist}, $b_n'$ has linearly growing subsurface projections to $C_n$ and $D_n.$ Now, by applying theorem \ref{MM_dist} we see that $b_n'$ has linearly growing translation distance in the pants graph of $S_{g,1}$. By theorem \ref{brock}, the mapping tori of $b_n'$ have linearly growing volume. These mapping tori are precisely the fibered knot complements $M_n.$
\end{proof}

For fixed $g$ the family $\overline{w_n}$ are the promised family of fibered knots in $M$ satisfying the conclusion of Theorem \ref{mainthm}.

\bibliographystyle{alpha}
\bibliography{bibliography}

\end{document}